\newtheorem{theorem}{Theorem}[section]
\newtheorem{proposition}[theorem]{Proposition}
\newtheorem{lemma}[theorem]{Lemma}
\newtheorem{corollary}[theorem]{Corollary}
\theoremstyle{definition}
\def\R{\mathbb{R}}
\def\N{\mathbb{N}}
\def \sgn{\operatorname{sgn}}
\def\trXY1-tq{Tr(X,Y, 1-\theta ,q)}
\definecolor{darkred}{rgb}{0.7,0.1,0.1}
\title[The Laplace operator on the Sierpinski gasket]{The Laplace operator on the Sierpinski gasket with Robin boundary conditions}
\author{Brigitte E. Breckner}
\address{B.~E.~Breckner, Babe\c{s}-Bolyai University, Faculty of Mathematics and Computer Science, Department of Mathematics, 400084 Cluj-Napoca, Romania}
\email{brigitte@math.ubbcluj.ro}
\author{Ralph Chill}
\address{R.~Chill, Institut f\"ur Analysis, Fachrichtung Mathematik, TU Dresden, 01062 Dresden, Germany}
\email{ralph.chill@tu-dresden.de}
\begin{document}

\date{\today}

\keywords{Sierpinski gasket, Laplace operator, Robin boundary condition, nonlinear semigroups, Dirichlet forms}

\subjclass{Primary: 28A80, 31C25, 35R02 Secondary: 35J99, 46E35, 49J52}  

\begin{abstract} 
We study the Laplace operator on the Sierpinski gasket with nonlinear Robin boundary conditions. We show that for certain Robin boundary conditions the Laplace operator generates a positive, order preserving, $L^\infty$-contractive semigroup which is sandwiched (in the sense of domination) between the semigroups generated by the Dirichlet-Laplace operator and the Neumann-Laplace operator. We also characterise all local semigroups which are sandwiched between these two extremal semigroups by showing that their generators are Robin-Laplace operators.
\end{abstract}

\renewcommand{\subjclassname}{\textup{2010} Mathematics Subject Classification}

\maketitle

\section{Introduction}

Laplace type operators on fractals and Dirichlet forms on fractals have been studied during the last about 20 years. For the Sierpinski gasket Kigami \cite{Ki89,Ki93} has given a definition of a Laplace operator which, on the one hand, is a natural extension of the Laplace operator on the unit interval (the Sierpinski gasket in dimension $1$) and, on the other hand, appears to be the limit of the Laplace operators on the subgraphs which come up in the construction of the Sierpinski gasket. We refer the reader to the monograph by Strichartz \cite{St06} for an exposition of the theory and to the articles \cite{FuSh92,HePeSt04,IoRoTe12,MeStTe04,Ms97,Ms98b,Ms98,StWo04} for further results. In particular, the Laplace operator on the Sierpinski gasket admits a variational definition in the sense that the Laplace operator is the operator associated with an appropriate Dirichlet form or a quadratic energy. For the Laplace operator associated with the free energy, which is the limit of a sequence of energies of appropriate graph Laplace operators, there exists even a Gau{\ss}-Green formula, an interpretation of the normal derivative on the intrinsic boundary and consequently an interpretation of associated Neumann boundary conditions. The purpose of this article is to study Laplace type operators on the Sierpinski gasket with Robin type boundary conditions, and in particular to study the influence of boundary conditions on the evolution generated by these operators. We define the Laplace operators as the subgradients of energies which need not to be quadratic, so that our study includes nonlinear Robin type boundary conditions. We show that for certain Robin type boundary conditions these subgradients generate positive, order preserving and $L^\infty$-contractive semigroups on $L^2_\mu (V)$, where $V$ is the Sierpinski gasket and $\mu$ is an appropriate Borel measure on $V$ which need not be the selfsimilar measure, that is, a multiple of the $d$-dimensional Hausdorff measure, $d$ being the Hausdorff dimension of $V$. The energies we consider are thus nonlinear Dirichlet forms in the sense introduced by B\'enilan \& Picard \cite{BePi79a} and later by Cipriano \& Grillo \cite{CiGr03}. We show in addition that the generated semigroups are sandwiched between the semigroup generated by the Laplace operator with Dirichlet boundary conditions and the semigroup generated by the free Laplace operator with Neumann boundary conditions. Finally, we characterise all local semigroups which are sandwiched between the latter two semigroups by showing that these are exactly the semigroups generated by the Laplace operator with Robin boundary conditions. This result is an analogue of corresponding result from Arendt \& Warma \cite{ArWa03b} and Chill \& Warma \cite{ChWa12} respectively for the Laplace operator and the $p$-Laplace operator on domains in $\R^N$.\\

 {\it Notations.} We denote by $\N$ the set of natural
numbers $\{0,\, 1,\, 2,\dots\}$, by $\N^*:=\N\setminus\{0\}$ the set
of positive naturals, and by $|\cdot|$ the Euclidean norm on the
spaces $\R^N$, $N\in\N^*$. The spaces $\R^N$ are endowed with the topology induced by $|\cdot|$.

\section{Preliminaries}

Throughout the paper, we denote by $V$ 
the {\it Sierpinski gasket} in $\R^{N-1}$,  where $N\geq2$ is a fixed natural number. There are two different approaches that lead to $V$, starting from given points $p_1,\dots, p_N\in\R^{N-1}$ with $|p_i-p_j|=1$ for $i\neq j$, and from the similarities 
$F_i\colon\R^{N-1}\to\R^{N-1}$, defined by
\[
F_i(x)=\frac12\,x+\frac12\,p_i, 
\]
for $i\in\{1,\dots,N\}$. While in the first approach the set $V$ appears as the unique nonempty compact subset of $\R^{N-1}$ satisfying the equality
\begin{equation}\label{defST}
V=\bigcup_{i=1}^N F_i(V),
\end{equation}
in the second one $V$ is obtained as the closure of the set $V_*:=\bigcup_{m\in\N}V_m$, where 
\begin{equation}\label{Vm}
V_0:=\{p_1,\dots,p_N\}\hbox{ and } V_m:=\bigcup_{i=1}^N F_i(V_{m-1}), \hbox{ for } m\in\N^*.
\end{equation}
In what follows $V$ is
considered to be endowed with the relative topology induced from the Euclidean
topology on $\R^{N-1}$. The set $V_0$ is called the
{\it intrinsic boundary} of the Sierpinski gasket.

For every $m\in\N^*$ denote by ${\mathfrak W}_m:=\{1,\dots, N\}^m$. For $w=(w_1,\dots,w_m)\in {\mathfrak W}_m$ put
$F_w:=F_{w_1}\circ\dots \circ F_{w_m}$. The equality (\ref{defST})
clearly yields
\begin{equation*} 
V=\bigcup_{w\in {\mathfrak W}_m}F_w(V).
\end{equation*}
This equation is the {\it level $m$ decomposition of} $V$,
and each $F_w(V)$, $w\in {\mathfrak W}_m$, is called a {\it cell of
level} $m$, or, for short, an $m$-cell. We refer to $V_0$ as the $0$-cell.

The Sobolev type spaces $H^1(V)$ and $H_0^1(V)$ on the Sierpinski gasket are obtained as subsets of the spaces $C(V)$ and $C_0(V)$, respectively, where  $C(V)$ is the space of real-valued continuous functions on $V$, and 
\[
C_0(V):=\{u\in C(V)\mid u|_{V_0}=0\}
\]
is the space of continuous functions on $V$ which vanish on the intrinsic boundary. Both spaces $C(V)$ and $C_0(V)$ are endowed with the usual supremum norm $\|\cdot\|_{\sup}$. The basic ingredient for defining the Sobolev type spaces on the Sierpinski gasket is a certain energy form which involves difference quotients and presents some analogy to the Dirichlet energy associated with the Laplace operator on domains in $\R^N$. In order to define this energy form, we 
follow both \cite[Section 1.3]{St06}, where these aspects are presented for $N\in\{2,\,3\}$, and \cite{Br13}, where  they are treated for arbitrary values $N\geq2$.  For this, consider first the sets $V_m$, $m\in\N$, defined in (\ref{Vm}). Let $m\in\N$. For $x$, $y\in V_m$  set $x\underset{m}{\sim} y$ if there 
is a cell of level $m$ containing both $x$ and $y$. Now, for functions $u$, $v\colon V_{m}\to \R$ we define the $m$-energy $W_m(u)$ by 
\begin{equation}\label{energyVm}
	W_{m}(u):= \frac12 \, \left(\frac{N+2}{N}\right)^m \sum_{\underset{x\underset{m}{\sim} y}{x,y\in V_m}}(u(x)-u(y))^2 ,
\end{equation}
and the semi-inner product $\langle u,v\rangle_m$ by
\begin{equation}\label{bilinenergyVm}
\langle u,v\rangle_m := \left(\frac{N+2}{N}\right)^m \sum_{\underset{x\underset{m}{\sim} y}{x,y\in V_m}} (u(x)-u(y))(v(x)-v(y)).
\end{equation}
Note that $W_m (u) = \frac12 \langle u,u\rangle_m$. Recall that $V_*$ is the union of the sets $V_m$, $m\in\N$. For functions $u$, $v\colon V_*\to\R$ or $u$, $v\colon V\to\R$ we denote simply by $W_m(u)$ the corresponding energy of the restrictions of $u$ to $V_m$, and we do similarly for $\langle u,v\rangle_m$. According to \cite[Corollary 3.3]{Br13} (see also \cite[pp. 14, 15]{St06}), for a function $u\colon V_*\to\R$, the sequence $\left(W_m(u)\right)_{m\in\N}$ is increasing. Thus it makes sense to define  its energy $W(u)$ by
\begin{equation}\label{defW}
W(u):=\lim_{m\to\infty} W_m (u) \in [0,\infty ].
\end{equation}
It can be shown (see the explanations on \cite[p. 19]{St06} in the case $N\in\{2,\,3\}$, respectively \cite[Theorem 4.4]{Br13} for arbitrary $N$) that functions of finite energy are H\"{o}lder continuous, hence uniformly continuous. In particular, functions of finite energy admit a unique continuous extension to $V$, and by identifying uniformly continuous functions on $V_*$ with their continuous extensions on $V$, it thus makes sense to say that $W$ is defined on the space $C(V)$. Define now
\[
H^1(V):=\{u\in C(V)\mid W(u) < \infty \} \quad \hbox{ and } \quad H_0^1(V):=H^1(V)\cap C_0(V).
\]
Using the polarization identity, it can be proved that for every $u$, $v\in H^1 (V)$ the sequence $\left(\langle u,v\rangle_m \right)_{m\in\N}$ is convergent, and that the function $\langle\cdot ,\cdot\rangle \colon H^1(V)\times H^1(V)\to\R$ given by
\begin{equation}\label{defprodscgeneral}
\langle u,v\rangle =\lim_{m\to\infty} \langle u,v\rangle_m \quad \text{for all } u,\,v\in H^1(V) ,
\end{equation}
is a semi-inner product satisfying $W(u) = \frac12 \langle u,u\rangle$ ($u\in H^1 (V)$). 

We summarise the main results of \cite[Section 1.4]{St06} (see also \cite{Br13}) concerning the basic properties of $H^1(V)$ and $H_0^1(V)$ in the following theorem.

\begin{theorem}\label{properties} 
The following assertions hold:
\begin{itemize}
\item[$1^\circ$] The spaces $H^1(V)$ and $H^1_0 (V)$ are linear, dense subspaces of $(C(V),\|\cdot\|_{\sup})$ and $(C_0 (V),\|\cdot\|_{\sup})$, respectively.

\item[$2^\circ$] For $\alpha :=\frac{\ln \frac {N+2}{N}}{2\ln 2}$ there exists a constant $c>0$ such that for every $u\in H^1(V)$ and every $x$, $y\in V$
\begin{equation}\label{Hoelder}
|u(x)-u(y)|\leq c|x-y|^\alpha\, \sqrt{W(u)} .
\end{equation}

\item[$3^\circ$] For any fixed $y\in V$
\[
 \langle u,v\rangle_{H^1} := u(y)v(y) + \langle u,v\rangle \quad (u,\, v\in H^1 (V))
\]
defines an inner product on $H^1 (V)$. 

\item[$4^\circ$] The spaces $(H^1(V),\langle\cdot,\cdot\rangle_{H^1})$ and $(H^1_0 (V),\langle\cdot,\cdot\rangle )$ are real Hilbert spaces. 

\item[$5^\circ$] The embedding of $(H^1(V),\|\cdot\|_{H^1})$ into $(C(V),\|\cdot\|_{\sup})$ is compact. 
\end{itemize}
\end{theorem}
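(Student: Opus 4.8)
The plan is to treat the Hölder estimate $2^\circ$ as the engine driving all five assertions, and to establish it first directly from the renormalisation built into \eqref{energyVm}. Fix $u$ with $W(u)<\infty$ and a pair of vertices $x,y\in V_m$ belonging to a common $m$-cell, so that $x\underset{m}{\sim}y$. Isolating a single summand in \eqref{energyVm} and using $W_m(u)\le W(u)$ gives
\begin{equation*}
(u(x)-u(y))^2 \le 2\Bigl(\tfrac{N}{N+2}\Bigr)^m W(u) = 2\cdot 2^{-2m\alpha}\,W(u),
\end{equation*}
where the identity $\bigl(\tfrac{N}{N+2}\bigr)^{m/2}=2^{-m\alpha}$ is exactly the definition of $\alpha$. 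Since the corners of an $m$-cell lie at mutual Euclidean distance $2^{-m}$, this already yields \eqref{Hoelder} for cell vertices at that scale.

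To pass from cell vertices to arbitrary $x,y\in V$ I would use a chaining argument across scales: given $x,y\in V_*$ choose $m$ with $2^{-(m+1)}<|x-y|\le 2^{-m}$; then $x$ and $y$ lie in the same or in two adjacent $m$-cells, and one connects them by a finite chain of cell vertices passing only through cells of level $\ge m$. Summing the per-cell bounds along the chain produces a geometric series $\sum_{k\ge m}2^{-k\alpha}$, which converges precisely because $\alpha>0$ and is comparable to $2^{-m\alpha}\le|x-y|^\alpha$; this gives \eqref{Hoelder} on the dense set $V_*$, and then on all of $V$ by uniform continuity. This chaining, together with the exact matching of the renormalisation constant $\bigl(\tfrac{N+2}{N}\bigr)^m$ with the exponent $\alpha$, is the one genuinely fractal-specific step and I expect it to be the main obstacle; it is carried out in \cite[Theorem 4.4]{Br13} and \cite[p.~19]{St06}.

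With $2^\circ$ in hand the remaining parts follow by soft arguments. For $1^\circ$, linearity of $H^1(V)$ is immediate once one knows that $\sqrt{W(\cdot)}$ is a seminorm, which is inherited from the semi-inner product \eqref{defprodscgeneral} by passing to the limit in the triangle inequality for each $\langle\cdot,\cdot\rangle_m$; and $H^1_0(V)$ is then linear as the intersection of $H^1(V)$ with the linear space $C_0(V)$. For density I would approximate a given $f\in C(V)$ by its harmonic splines $h_m$, i.e.\ the functions agreeing with $f$ on $V_m$ and harmonic on each $m$-cell: these have finite energy, and the maximum principle for harmonic functions together with the uniform continuity of $f$ forces $\|f-h_m\|_{\sup}\to 0$; if $f\in C_0(V)$ the splines also vanish on $V_0$, giving density of $H^1_0(V)$ in $C_0(V)$. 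For $3^\circ$, bilinearity and symmetry of $\langle\cdot,\cdot\rangle_{H^1}$ are clear, and positive definiteness follows because $\langle u,u\rangle=2W(u)=0$ forces $u(x)=u(y)$ whenever $x\underset{m}{\sim}y$; as each neighbour graph on $V_m$ is connected, $u$ is constant, and the extra term $u(y)^2$ then pins this constant to $0$.

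Finally, $2^\circ$ yields a Poincaré inequality on $H^1_0(V)$: taking $y=p_1\in V_0$ gives $\|u\|_{\sup}\le c\,(\operatorname{diam}V)^\alpha\sqrt{W(u)}$, so on $H^1_0(V)$ the energy seminorm is in fact a norm equivalent to $\|\cdot\|_{H^1}$. For completeness in $4^\circ$, a Cauchy sequence $(u_n)$ for $\|\cdot\|_{H^1}$ has $(u_n(y))$ Cauchy in $\R$ and $W(u_n-u_m)\to 0$, so by $2^\circ$ it is uniformly Cauchy and converges to some $u\in C(V)$; since $W=\sup_m W_m$ is lower semicontinuous for pointwise convergence on the finite sets $V_m$, letting $m\to\infty$ in $W(u_n-u_m)<\varepsilon$ gives $W(u_n-u)\le\varepsilon$ and $W(u)<\infty$, whence $u\in H^1(V)$ and $u_n\to u$ in norm; the $H^1_0$ case follows because $C_0(V)$ is closed for $\|\cdot\|_{\sup}$. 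For the compact embedding $5^\circ$ I would invoke Arzel\`a--Ascoli: on the unit ball of $H^1(V)$ the bounds $W(u)\le\tfrac12$ and $|u(y)|\le 1$ give uniform boundedness through the Poincaré-type estimate, while \eqref{Hoelder} provides a uniform Hölder modulus, hence equicontinuity; relative compactness in $(C(V),\|\cdot\|_{\sup})$ follows at once.
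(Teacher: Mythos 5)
Your proposal is correct and follows essentially the same route as the paper: the H\"older estimate $2^\circ$ (obtained from the renormalised graph energies and a chaining argument, delegated in its technical core to \cite[Theorem 4.4]{Br13} and \cite[p.~19]{St06}, exactly as the paper does) drives everything else, density is obtained via harmonic splines/harmonic extension, completeness via uniform convergence plus lower semicontinuity of $W=\sup_m W_m$, and compactness via Arzel\`a--Ascoli. You merely spell out the standard arguments that the paper dispatches by citation (in particular giving a direct completeness proof for $4^\circ$ where the paper invokes \cite[Theorem 1.4.2]{St06} on the quotient by constants), which is a fair and accurate reconstruction.
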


\begin{proof}
Assertion $1^\circ$ follows by using a pointwise approximation on the sets $V_m$ and the concept of harmonic extension (see for example \cite[Theorem 1.4.4]{St06} in the case $N\in\{2,\,3\}$, and \cite[Theorem 4.6]{Br13} for arbitrary $N$). For the H\"older continuity (assertion $2^\circ$) we refer to \cite[Theorem 4.4]{Br13}. Assertion $3^\circ$ follows from  (\ref{bilinenergyVm}) and (\ref{defprodscgeneral}). By \cite[Theorem 1.4.2]{St06}, the quotient of $H^1 (V)$ by the constant functions, equipped with the inner product $\langle\cdot ,\cdot\rangle$, is a Hilbert space. From here and from the fact that functions in $H^1 (V)$ are H\"older continuous (assertion $2^\circ$), one easily obtains assertion $4^\circ$. Assertion $5^\circ$ is an immediate consequence of assertion $2^\circ$ and the Arzel\`a-Ascoli theorem.
\end{proof}

The next result follows immediately from the definition of the energy form $W$.

\begin{lemma}\label{Lipschitz}
If $h\colon \R\to\R$ is a Lipschitz map with Lipschitz constant $L\geq0$, and if $u\in H^1(V)$, then $h\circ u\in H^1(V)$ and $W(h\circ u)\leq L^2W(u)$. 
\end{lemma}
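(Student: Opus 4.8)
The plan is to verify the claimed inequality first on each finite approximation $V_m$ and then to pass to the limit $m\to\infty$, using the definition (\ref{defW}) of $W$ as the limit of the $m$-energies $W_m$. The only preliminary point is to note that $h\circ u$ indeed lies in $C(V)$: since $h$ is Lipschitz it is in particular continuous, and $u\in H^1(V)\subseteq C(V)$, so $h\circ u$ is continuous on $V$. Hence it makes sense to evaluate $W_m(h\circ u)$ and $W(h\circ u)$, and it only remains to control the energy.

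For the core estimate, fix $m\in\N$ and a pair $x,y\in V_m$ with $x\underset{m}{\sim} y$. The Lipschitz property of $h$ gives $|h(u(x))-h(u(y))|\leq L\,|u(x)-u(y)|$, and therefore $(h(u(x))-h(u(y)))^2\leq L^2(u(x)-u(y))^2$. Inserting this termwise into the definition (\ref{energyVm}) of the $m$-energy and factoring out the constant $L^2$ yields
\begin{equation*}
W_m(h\circ u)\leq L^2\,W_m(u)\qquad\text{for every }m\in\N .
\end{equation*}

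Finally, since the sequences $\left(W_m(h\circ u)\right)_{m\in\N}$ and $\left(W_m(u)\right)_{m\in\N}$ are both increasing and converge in $[0,\infty]$ to $W(h\circ u)$ and $W(u)$ respectively, passing to the limit in the displayed inequality gives $W(h\circ u)\leq L^2 W(u)$. As $u\in H^1(V)$ we have $W(u)<\infty$, so $W(h\circ u)<\infty$ and thus $h\circ u\in H^1(V)$, which completes the argument. I expect no serious obstacle here: the entire estimate is termwise at each level $m$, and the only step deserving a word of care is the passage to the limit, which is immediate from the monotonicity of $\left(W_m\right)_{m\in\N}$ recorded before (\ref{defW}).
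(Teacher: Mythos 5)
Your proof is correct and is exactly the argument the paper has in mind: the paper omits the proof, stating only that the lemma ``follows immediately from the definition of the energy form $W$,'' and your termwise estimate $(h(u(x))-h(u(y)))^2\leq L^2(u(x)-u(y))^2$ at each level $m$ followed by the monotone limit is precisely that immediate argument. No gaps.
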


\begin{corollary}\label{ausLipschitz}
The following assertions hold:
\begin{itemize}
\item[$1^\circ$] If $u\in H^1(V)$, then $u^+$, $u^-\in H^1(V)$.

\item[$2^\circ$] If $u,\,v\in H^1(V)$, then $u\vee v,\, u\wedge v\in H^1(V)$.

\item[$3^\circ$] If $A$ and $B$ are nonempty, disjoint, closed subsets of $V$, then there exists $u\in H^1(V)$, $u\colon V\to[0,1]$,  such that $u(A)=\{0\}$ and $u(B)=\{1\}$.
\end{itemize}
\end{corollary}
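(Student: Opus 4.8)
The plan is to deduce all three assertions from Lemma~\ref{Lipschitz}, from the linear structure of $H^1(V)$ (Theorem~\ref{properties}, $1^\circ$), and, for the last part, from the density of $H^1(V)$ in $C(V)$. For $1^\circ$ I would simply observe that the maps $t\mapsto t^+$ and $t\mapsto t^-$ are Lipschitz with constant $1$, so that $u^+$ and $u^-$, being compositions of these maps with $u$, lie in $H^1(V)$ by Lemma~\ref{Lipschitz}. For $2^\circ$ I would use the identities
\[
u\vee v=\tfrac12\bigl(u+v+|u-v|\bigr),\qquad u\wedge v=\tfrac12\bigl(u+v-|u-v|\bigr),
\]
together with $|w|=w^++w^-$. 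Since $H^1(V)$ is a linear subspace of $C(V)$ and is stable under $w\mapsto w^+$ and $w\mapsto w^-$ by $1^\circ$, both $u\vee v$ and $u\wedge v$ belong to $H^1(V)$.

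The only part carrying real content is $3^\circ$, and the main obstacle is that functions in $H^1(V)$ are merely $\alpha$-Hölder (Theorem~\ref{properties}, $2^\circ$) with $\alpha<1$, so the Euclidean distance function $x\mapsto\operatorname{dist}(x,A)$, which would separate $A$ and $B$ in the classical setting, need not have finite energy; one cannot prescribe the exact boundary values by such a direct geometric formula. To circumvent this I would first note that, since $A$ and $B$ are disjoint and closed in the compact space $V$, Urysohn's lemma provides a continuous $g\colon V\to[0,1]$ with $g|_A=0$ and $g|_B=1$. Because $H^1(V)$ is dense in $(C(V),\|\cdot\|_{\sup})$ (Theorem~\ref{properties}, $1^\circ$), I can then choose $w\in H^1(V)$ with $\|w-g\|_{\sup}<\tfrac14$, so that $w<\tfrac14$ on $A$ and $w>\tfrac34$ on $B$.

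Finally I would sharpen this approximate separator into an exact one by truncation, invoking $1^\circ$ and $2^\circ$ and the fact that constant functions lie in $H^1(V)$ (their energy vanishes). Setting
\[
u:=2\Bigl(\bigl(w-\tfrac14\bigr)^+\wedge\tfrac12\Bigr),
\]
one checks that $u\in H^1(V)$, that $0\le u\le1$, that $u=0$ on $A$ (where $w-\tfrac14<0$, so the positive part vanishes), and that $u=1$ on $B$ (where $(w-\tfrac14)^+>\tfrac12$, so the truncation equals $\tfrac12$ before the factor $2$). This produces the desired function and completes the argument. As an alternative route one could instead build $u$ as the piecewise harmonic extension of suitable boundary data on $V_m$, choosing $m$ so large that no $m$-cell meets both $A$ and $B$; but the density-plus-truncation approach is more economical and relies only on facts already at hand.
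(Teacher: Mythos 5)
Your proposal is correct and follows essentially the same route as the paper: Lipschitz composition (Lemma~\ref{Lipschitz}) for $1^\circ$, reduction to $1^\circ$ via lattice identities for $2^\circ$, and Urysohn plus density of $H^1(V)$ in $C(V)$ plus a Lipschitz truncation for $3^\circ$. The only cosmetic differences are that the paper writes $u\vee v=u+(v-u)^+$ instead of your $\tfrac12(u+v+|u-v|)$, and implements the truncation in $3^\circ$ as a single cut-off $h\circ(2w-\tfrac12)$ rather than your composition of $(\cdot)^+$ and $\wedge\tfrac12$; both yield the same function up to trivial rescaling.
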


\begin{proof}
$1^\circ$ Pick $u\in H^1(V)$. The map $h\colon \R\to\R$, defined by $h(t)=\max\{t,\,0\}$, is clearly Lipschitz continuous. Thus, by Lemma \ref{Lipschitz}, we get that $u^+=h\circ u\in  H^1(V)$. Since $\,u^-=(-u)^+$, it follows that $u^-\in H^1(V)$, too.

\noindent $2^\circ$ The statement follows from $1^\circ$, since 
\[
u\vee v=u+(v-u)^+ \hbox{ and } u\wedge v=v-(v-u)^+.
\]

\noindent $3^\circ$ Let $A$ and $B$ be nonempty, disjoint, closed subsets of $V$. By Urysohn's Lemma there exists a continuous function $v\colon V\to [0,1]$ such that $v(A)=\{0\}$ and $v(B)=\{1\}$. In view of assertion $1^\circ$ of Theorem \ref{properties} there exists an element $w\in H^1(V)$ such that $\| v-w\|_{\sup}\leq\frac14$. Then $w\leq \frac14$ on $A$ and $w\geq \frac34$ on $B$. Define the cut-off function $h:\R\to\R$ by
\[
 h(s) = \begin{cases}
         0 & \text{if } s\leq 0 , \\[2mm]
         s & \text{if } s\in (0,1) , \\[2mm]
         1 & \text{if } s\geq 1 ,
        \end{cases}
\]
so that $h$ is Lipschitz continuous. Setting
\[
 u := h \circ \left( 2w-\frac12 \right) ,
\]
which belongs to $H^1 (V)$ by Lemma \ref{Lipschitz}, gives the desired element.   
\end{proof}

\begin{lemma}\label{stattpointwise}
Fix $y\in V$. Let $(u_n)$ be a sequence in $H^1(V)$ that converges in the $\|\cdot\|_{\sup}$ norm to $u\in H^1(V)$. If $W(u_n)\leq W(u)$ for every $n$, then   $(u_n)$  converges also in the $\|\cdot\|_{H^1}$ norm to $u$.
\end{lemma}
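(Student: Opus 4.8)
The plan is to reduce everything to two convergences, namely $W(u_n)\to W(u)$ and the convergence of the cross term $\langle u_n,u\rangle\to\langle u,u\rangle$, and then to extract both from the lower semicontinuity of $W$ with respect to uniform convergence. Recall from assertion $3^\circ$ of Theorem \ref{properties} and the identity $W(v)=\tfrac12\langle v,v\rangle$ that $\|v\|_{H^1}^2=v(y)^2+\langle v,v\rangle=v(y)^2+2W(v)$. Expanding the seminorm,
\[
\|u_n-u\|_{H^1}^2=(u_n(y)-u(y))^2+\langle u_n-u,u_n-u\rangle,
\]
and
\[
\langle u_n-u,u_n-u\rangle=2W(u_n)+2W(u)-2\langle u_n,u\rangle .
\]
The first term tends to $0$ since $u_n\to u$ uniformly forces $u_n(y)\to u(y)$. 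Hence it suffices to prove $W(u_n)\to W(u)$ and $\langle u_n,u\rangle\to\langle u,u\rangle=2W(u)$; granting these, the right-hand side above tends to $2W(u)+2W(u)-2\cdot 2W(u)=0$.

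For the first convergence I would use that $W$ is lower semicontinuous for uniform convergence. By \eqref{defW} one has $W=\sup_m W_m$, and each $W_m$ from \eqref{energyVm} depends only on the finitely many values of its argument on $V_m$, so it is continuous under uniform (indeed pointwise) convergence. Consequently, for every fixed $m$, $\liminf_n W(u_n)\ge\liminf_n W_m(u_n)=W_m(u)$, and taking the supremum over $m$ gives $W(u)\le\liminf_n W(u_n)$. Combined with the hypothesis $W(u_n)\le W(u)$ this forces $W(u_n)\to W(u)$.

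The main point is the cross term $\langle u_n,u\rangle\to\langle u,u\rangle$, which is exactly weak convergence $u_n\rightharpoonup u$ in the energy seminorm. I would obtain it directly from the lower semicontinuity by a polarisation argument: for an arbitrary $v\in H^1(V)$ we have $u_n+v\to u+v$ uniformly, so lower semicontinuity yields $W(u+v)\le\liminf_n W(u_n+v)$. Expanding $W(w+v)=W(w)+\langle w,v\rangle+W(v)$ and using $W(u_n)\to W(u)$, this reduces to $\langle u,v\rangle\le\liminf_n\langle u_n,v\rangle$; replacing $v$ by $-v$ gives $\limsup_n\langle u_n,v\rangle\le\langle u,v\rangle$, whence $\langle u_n,v\rangle\to\langle u,v\rangle$ for every $v$. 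Choosing $v=u$ yields the required cross-term convergence and finishes the proof. (Alternatively one could observe that $(u_n)$ is bounded in the Hilbert space $H^1(V)$, extract a weakly convergent subsequence, and identify its limit with $u$ through the boundedness of point evaluations coming from assertion $5^\circ$ of Theorem \ref{properties}; the polarisation route is preferable since it is self-contained.) The only genuinely delicate step is the passage from the continuity of each $W_m$ to the lower semicontinuity of the increasing supremum $W$, which is precisely the $\liminf$ estimate carried out above.
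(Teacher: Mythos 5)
Your proof is correct, but it follows a genuinely different route from the paper's. The paper argues softly: the hypothesis makes $(u_n)$ bounded in the Hilbert space $(H^1(V),\langle\cdot,\cdot\rangle_{H^1})$, so one extracts a weakly convergent subsequence, identifies its limit with $u$ via the compact embedding into $C(V)$ (assertion $5^\circ$ of Theorem \ref{properties}), uses weak lower semicontinuity of the convex continuous functional $W$ together with $W(u_{n_k})\leq W(u)$ to get convergence of norms, upgrades weak to strong convergence by uniform convexity, and finishes with a sub-subsequence argument. You instead expand $\|u_n-u\|_{H^1}^2$ explicitly and reduce everything to $W(u_n)\to W(u)$ and $\langle u_n,u\rangle\to\langle u,u\rangle$, both of which you extract from the lower semicontinuity of $W=\sup_m W_m$ under uniform (indeed pointwise) convergence — each $W_m$ being a finite sum over points of $V_m$ — combined with a polarisation trick applied to $W(u_n+v)$. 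Your argument is more elementary and entirely self-contained: it needs no reflexivity, no compact embedding, no uniform convexity, and no subsequence extraction, at the price of exploiting the specific structure of $W$ as an increasing limit of discrete energies. The paper's argument is softer and would transfer verbatim to any convex functional that is continuous on a Hilbert space compactly embedded into $C(V)$. All the individual steps of your version check out: the cancellation of $W(u_n)$ and $W(v)$ in the polarisation inequality is legitimate because all quantities involved are finite ($u$, $v$, $u_n\in H^1(V)$), and the sign flip $v\mapsto -v$ correctly converts the $\liminf$ bound into the matching $\limsup$ bound.
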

\begin{proof} By assumption, the sequence $(u_n)$ is bounded in the  $\|\cdot\|_{H^1}$ norm. Hence, by the reflexivity of the space $(H^1(V),\langle\cdot,\cdot\rangle_{H^1})$, this sequence has a subsequence $(u_{n_k})$ that converges in the weak topology to an element $\bar u\in H^1(V)$.  Assertion $5^\circ$ of Theorem \ref{properties}  implies that $(u_{n_k})$ converges in the $\|\cdot\|_{\sup}$ norm to $\bar u$, thus $\bar u=u$.

On the other hand, since $W$ is convex and continuous in the $\|\cdot\|_{H^1}$ norm, it is weakly lower semicontinuous. So, the inequalities $W(u_{n_k})\leq W(u)$, for all $k$, yield that $\displaystyle\lim_{k\to\infty}W(u_{n_k})=W(u)$. It follows that $\displaystyle\lim_{k\to\infty}\|u_{n_k}\|_{H^1}=\|u\|_{H^1}$. Since 
$(H^1(V),\langle\cdot,\cdot\rangle_{H^1})$ is uniformly convex, the weak convergence of $(u_{n_k})$ to $u$ implies now the convergence in the $\|\cdot\|_{H^1}$ norm of this subsequence to $u$.

The above argument shows in fact that every subsequence of $(u_n)$ has a subsequence that converges in the $\|\cdot\|_{H^1}$ norm to $u$. This  yields finally the convergence of $(u_n)$ in the $\|\cdot\|_{H^1}$ norm to $u$. 
\end{proof}

\begin{corollary}\label{ausstattpointwise}
Fix $y\in V$ and let $u\in H^1(V)$. For every $n\in\N^*$ define  the map $h_n\colon\R\to\R$ by
\[
h_n(t)=
\left\{
\begin{array}{ll}
t+\frac1n &\text{if } t<-\frac1n , \\[2mm]
0 &\text{if } -\frac1n\leq t\leq \frac1n , \\[2mm]
t-\frac1n & \text{if } t>\frac1n.
\end{array}
\right.
\]
Then $h_n\circ u\in H^1(V)$, for every $n\in\N^*$, and the sequence $(h_n\circ u)$ converges in the $\|\cdot\|_{H^1}$ norm to $u$.
\end{corollary}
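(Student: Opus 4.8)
The plan is to obtain both conclusions as direct consequences of Lemma \ref{Lipschitz} and Lemma \ref{stattpointwise}, with essentially no computation beyond an elementary estimate. First I would observe that each $h_n$ is piecewise affine with slopes in $\{0,1\}$, hence Lipschitz continuous with Lipschitz constant $L=1$. Lemma \ref{Lipschitz} then immediately yields that $h_n\circ u\in H^1(V)$ for every $n\in\N^*$, and moreover that
\[
W(h_n\circ u)\le L^2\,W(u)=W(u).
\]
This settles the first assertion and, at the same time, provides precisely one of the two hypotheses required to invoke Lemma \ref{stattpointwise}.

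For the convergence statement I would set $u_n:=h_n\circ u$ and apply Lemma \ref{stattpointwise} (with the same fixed $y\in V$). Two hypotheses must be checked: that $(u_n)$ converges to $u$ in the $\|\cdot\|_{\sup}$ norm, and that $W(u_n)\le W(u)$ for every $n$. The second is exactly the energy bound established above. For the first, I would record the elementary pointwise estimate
\[
|h_n(t)-t|\le\frac1n\qquad(t\in\R),
\]
which is verified by inspecting the three branches defining $h_n$: on $\{t<-\tfrac1n\}$ and $\{t>\tfrac1n\}$ the difference equals $\mp\tfrac1n$, while on $\{|t|\le\tfrac1n\}$ one has $h_n(t)-t=-t$ with $|t|\le\tfrac1n$. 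Taking the supremum over $x\in V$ of $|h_n(u(x))-u(x)|$ then gives $\|h_n\circ u-u\|_{\sup}\le\tfrac1n\to0$, so $(u_n)$ converges uniformly to $u$.

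With both hypotheses in place, Lemma \ref{stattpointwise} delivers the convergence of $(h_n\circ u)$ to $u$ in the $\|\cdot\|_{H^1}$ norm, completing the proof. I do not expect any serious obstacle here: the only mild subtlety is conceptual rather than technical, namely that a direct verification of $H^1$-convergence would require showing that the energy $W\bigl((h_n\circ u)-u\bigr)$ tends to $0$, which is not transparent from the definition of $h_n$. The point of the argument is to avoid this altogether, letting the combination of the energy inequality $W(h_n\circ u)\le W(u)$, weak compactness, and uniform convexity encapsulated in Lemma \ref{stattpointwise} do the work.
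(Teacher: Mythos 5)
Your argument is correct and is essentially identical to the paper's own proof: both verify that $h_n$ is Lipschitz with constant $1$ to get $h_n\circ u\in H^1(V)$ and $W(h_n\circ u)\le W(u)$ via Lemma \ref{Lipschitz}, note that $\|h_n\circ u-u\|_{\sup}\le\frac1n$, and then invoke Lemma \ref{stattpointwise}. No gaps.
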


\begin{proof}
Note that every $h_n$, $n\in\N^*$, is a Lipschitz map with Lipschitz constant $1$. Thus, Lemma \ref{Lipschitz} yields that $h_n\circ u\in H^1(V)$ and $W(h_n\circ u)\leq W(u)$, for every $n\in\N^*$. Furthermore, by definition of $h_n$, we get that $\|h_n\circ u-u\|_{\sup}\leq \frac1n$, for every $n\in\N^*$. Thus $(h_n\circ u)$ converges in the $\|\cdot\|_{\sup}$ norm to $u$. Lemma \ref{stattpointwise} yields the claim.
\end{proof}

\begin{proposition} \label{orthogonal}
The following assertions hold:
\begin{itemize}
\item[$1^\circ$] For every $u$, $v\in H^1(V)$ satisfying $|u|\wedge |v|=0$ one has that
\begin{align*}
 & \langle u,v\rangle =0 \quad \text{and} \\
 & W(u+v)=W(u)+W(v) .
\end{align*}

\item[$2^\circ$] If $u\in H^1(V)$, then $|u|\in H^1(V)$ and $W(u)=W(|u|)$.

\item[$3^\circ$] For every $u$, $v\in H^1(V)$
\[
 W(u\vee v) + W(u\wedge v) = W(u) + W(v) .
\]
\end{itemize}
\end{proposition}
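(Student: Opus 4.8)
The plan is to deduce assertions $2^\circ$ and $3^\circ$ from $1^\circ$, and to prove $1^\circ$ itself by a truncation-and-localisation argument. Throughout I use that $W$ is a quadratic form with associated bilinear form $\langle\cdot,\cdot\rangle$, so that $W(f\pm g)=W(f)+W(g)\pm\langle f,g\rangle$ for all $f,g\in H^1(V)$; in particular the two statements in $1^\circ$ are equivalent, since $W(u+v)=W(u)+W(v)$ holds exactly when $\langle u,v\rangle=0$. Granting $1^\circ$, assertion $3^\circ$ follows by setting $w:=u-v\in H^1(V)$ and writing $u\vee v=v+w^+$ and $u\wedge v=v-w^-$, where $w^\pm\in H^1(V)$ by Corollary \ref{ausLipschitz}. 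Expanding $W(v+w^+)$ and $W(v-w^-)$ with the identity above and using $w=w^+-w^-$, one finds
\[
 W(u\vee v)+W(u\wedge v)-\bigl(W(u)+W(v)\bigr)=W(w^+)+W(w^-)-W(w)=\langle w^+,w^-\rangle .
\]
Since $|w^+|\wedge|w^-|=w^+\wedge w^-=0$, assertion $1^\circ$ gives $\langle w^+,w^-\rangle=0$, which is exactly $3^\circ$. Assertion $2^\circ$ is obtained in the same manner: writing $|u|=u^++u^-$ and $u=u^+-u^-$ with $|u^+|\wedge|u^-|=0$, assertion $1^\circ$ yields $W(|u|)=W(u^+)+W(u^-)=W(u)$ (and $|u|\in H^1(V)$ again by Corollary \ref{ausLipschitz}).

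It remains to prove $1^\circ$, i.e.\ $\langle u,v\rangle=0$ whenever $|u|\wedge|v|=0$. The main difficulty to anticipate is that $|u|\wedge|v|=0$ only forces $\{u\neq0\}$ and $\{v\neq0\}$ to be disjoint, while their closures may still meet at junction points of the gasket; consequently the individual summands of $\langle u,v\rangle_m$ need not vanish for any $m$, and a naive term-by-term argument fails. I would therefore first create a positive gap between the supports. Using the cut-off maps $h_n$ of Corollary \ref{ausstattpointwise}, set $u_n:=h_n\circ u$ and $v_n:=h_n\circ v$. Then $\{u_n\neq0\}\subseteq\{|u|>\tfrac1n\}$ and $\{v_n\neq0\}\subseteq\{|v|>\tfrac1n\}$, so the compact sets $\overline{\{u_n\neq0\}}\subseteq\{|u|\ge\tfrac1n\}$ and $\overline{\{v_n\neq0\}}\subseteq\{|v|\ge\tfrac1n\}$ are disjoint, since a common point would satisfy $|u|,|v|\ge\tfrac1n$, contradicting $|u|\wedge|v|=0$. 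By Corollary \ref{ausstattpointwise} we have $u_n\to u$ and $v_n\to v$ in $\|\cdot\|_{H^1}$, and since the energy bilinear form is continuous on $H^1(V)\times H^1(V)$, it suffices to show $\langle u_n,v_n\rangle=0$ for each $n$.

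This reduces matters to the following localised statement: if $f,g\in H^1(V)$ have disjoint compact supports $A:=\overline{\{f\neq0\}}$ and $B:=\overline{\{g\neq0\}}$, then $\langle f,g\rangle=0$. Put $\delta:=\mathrm{dist}(A,B)>0$. Since an $m$-cell $F_w(V)$ has diameter $2^{-m}\,\mathrm{diam}(V)$, for all sufficiently large $m$ every $m$-cell has diameter smaller than $\delta$ and hence meets at most one of $A$ and $B$; a cell disjoint from $A$ carries $f\equiv0$, and a cell disjoint from $B$ carries $g\equiv0$. As each pair $x\underset{m}{\sim}y$ lies in a common $m$-cell, every summand $(f(x)-f(y))(g(x)-g(y))$ of $\langle f,g\rangle_m$ then vanishes, so $\langle f,g\rangle_m=0$ for all large $m$ and therefore $\langle f,g\rangle=\lim_{m\to\infty}\langle f,g\rangle_m=0$.

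The only genuine obstacle is the one flagged above: the hypothesis $|u|\wedge|v|=0$ is strictly weaker than disjointness of the \emph{closed} supports, so the cell-localisation of the last paragraph cannot be applied to $u,v$ directly. The role of the truncation step is precisely to replace $u,v$ by functions whose closed supports are separated by a positive distance, and its correctness rests entirely on the $H^1$-convergence supplied by Corollary \ref{ausstattpointwise} together with the continuity of $\langle\cdot,\cdot\rangle$. Everything else is a routine consequence of the quadratic structure of $W$ and of Corollary \ref{ausLipschitz}.
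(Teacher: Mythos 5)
Your proposal is correct and follows essentially the same route as the paper: the cell-localisation argument for functions whose closed supports are a positive distance apart, the reduction of the general case via the truncations $h_n$ of Corollary \ref{ausstattpointwise} and the $H^1$-continuity of $\langle\cdot,\cdot\rangle$, and the derivation of $2^\circ$ and $3^\circ$ from $1^\circ$ by expanding the quadratic form. The only cosmetic difference is that in $3^\circ$ you reduce to $\langle w^+,w^-\rangle=0$ for $w=u-v$, whereas the paper expands $W(u+(v-u)^+)+W(v-(v-u)^+)$ directly; the computation is the same.
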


\begin{proof}
$1^\circ$ We assume first that ${\rm supp}\, u \cap {\rm supp}\, v=\emptyset$. Set $\delta:={\rm dist}({\rm supp}\, u,{\rm supp}\, v)$. Then $\delta>0$. For every $m\in\N$ such that $\frac1{2^m}<\delta$, and every $x,\,y\in V_m$ with $x\underset{m}{\sim} y$ we then have $|x-y|=\frac1{2^m}<\delta$, thus $u(x)\cdot v(y)=0$. It follows that $\langle u,v\rangle_m=0$, hence $\displaystyle \langle u,v\rangle=\lim_{m\to\infty} \langle u,v\rangle_m= 0$.

In the general case, consider the maps $h_n$, $n\in\N^*$, defined in Corollary \ref{ausstattpointwise}, and put $u_n:=h_n\circ u$ and $v_n:=h_n\circ v$. By Corollary \ref{ausstattpointwise}, $(u_n)$ and $(v_n)$ converge in the $\|\cdot\|_{H^1}$ norm to $u$ and $v$, respectively. On the other hand, the definition of $h_n$ implies that 
\[
{\rm supp}\, u_n\cap{\rm supp}\, v_n=\emptyset, \ \forall\ n\in\N^*.
\]
So, from what we have proved at the beginning, it follows that $\langle u_n,v_n\rangle=0$, for every $n\in\N^*$. The continuity of the map $\langle \cdot,\cdot\rangle$ in  the $\|\cdot\|_{H^1}$ norm then implies that $\langle u,v\rangle=0$. From here we conclude that
$$W(u+v)=W(u)+\langle u,v\rangle+W(v)=W(u)+W(v).$$

\noindent $2^\circ$ Assertion $1^\circ$ of Corollary \ref{ausLipschitz} yields that $|u| = u^+ +u^-\in H^1(V)$.
Since  $u^+\cdot u^-=0$, we get, by assertion $1^\circ$, that
\[
W(u)=W( u^+ - u^-)=W(u^+)+W(u^-)=W( u^+ + u^-)=W(|u|).
\]

\noindent $3^\circ$ For every $u$, $v\in H^1 (V)$ we have, by assertion $1^\circ$, 
\begin{align*}
 W(u\vee v) + W(u\wedge v) & = W(u+ (v-u)^+) + W(v-(v-u)^+) \\
 & = W(u) + W(v) - \langle v-u , (v-u)^+\rangle + \langle (v-u)^+ , (v-u)^+\rangle \\
 & = W(u) + W(v) + \langle (v-u)^- , (v-u)^+ \rangle \\
 & = W(u) + W(v) . 
\end{align*}
This finishes the proof.
\end{proof}

We state now further properties of the energy $W$ that will be used in the sequel. These properties are in fact consequences of some basic inequalities concerning real numbers mentioned in the following lemma (whose straightforward proof is omitted).

\begin{lemma}\label{Ungleichungen}
Let $a_1,\,a_2,\,b_1,\,b_2,\,\alpha\in\R$ with $\alpha>0$. For $i\in\{1,\,2\}$ denote by 
\begin{align*}
A_i & :=\min\left\{\max\left\{a_i,\frac{a_i+b_i-\alpha}2\right\},\frac{a_i+b_i+\alpha}2\right\}, \\
B_i & :=\max\left\{\min\left\{b_i,\frac{a_i+b_i+\alpha}2\right\},\frac{a_i+b_i-\alpha}2\right\}, \\
C_i & :=\min\{|a_i|,\,b_i\}\cdot\sgn(a_i), \\
D_i & :=\max\{|a_i|,\,b_i\}.
\end{align*}
Then 
\begin{equation}\label{forcontractive1}
(A_1-A_2)^2+(B_1-B_2)^2\leq (a_1-a_2)^2+(b_1-b_2)^2,
\end{equation}
and, if $b_1\geq0$ and $b_2\geq0$,
\begin{equation}\label{fordomi1}
(C_1-C_2)^2+(D_1-D_2)^2\leq (a_1-a_2)^2+(b_1-b_2)^2.
\end{equation}
\end{lemma}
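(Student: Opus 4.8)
The plan is to read both inequalities as the statement that two explicit piecewise-linear maps $\R^2\to\R^2$, namely $(a_i,b_i)\mapsto(A_i,B_i)$ and, on the half-space $\{b\ge 0\}$, $(a_i,b_i)\mapsto(C_i,D_i)$, are nonexpansive for the Euclidean norm. The two maps are of a genuinely different nature, so I would reduce each of them to a single elementary inequality rather than try to treat them uniformly.

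For (\ref{forcontractive1}) I would first pass to the \emph{mean and half-difference} coordinates
$$s_i:=\frac{a_i+b_i}{2},\qquad d_i:=\frac{b_i-a_i}{2},$$
so that $a_i=s_i-d_i$ and $b_i=s_i+d_i$. Writing $\phi(t):=\max\{-\tfrac\alpha2,\min\{t,\tfrac\alpha2\}\}$ for the truncation of $t$ to the interval $[-\tfrac\alpha2,\tfrac\alpha2]$, a direct unravelling of the nested $\min$/$\max$ in the definitions shows that $A_i=s_i-\phi(d_i)$ and $B_i=s_i+\phi(d_i)$; indeed $A_i$ and $B_i$ are exactly the metric projections of $a_i$ and $b_i$ onto the interval centred at $s_i$ of radius $\tfrac\alpha2$. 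Consequently
$$(A_1-A_2)^2+(B_1-B_2)^2=2(s_1-s_2)^2+2\bigl(\phi(d_1)-\phi(d_2)\bigr)^2,$$
while the right-hand side of (\ref{forcontractive1}) equals $2(s_1-s_2)^2+2(d_1-d_2)^2$. The $s$-terms cancel and the inequality collapses to $|\phi(d_1)-\phi(d_2)|\le|d_1-d_2|$, which holds because $\phi$, being the projection onto a closed interval, is $1$-Lipschitz.

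For (\ref{fordomi1}) the useful observation is the pointwise identity $C_i^2+D_i^2=a_i^2+b_i^2$, valid because $\{|C_i|,D_i\}=\{|a_i|,b_i\}$ as multisets. Expanding both sides of (\ref{fordomi1}), all squared terms therefore cancel and the inequality is equivalent to the cross-term estimate
$$C_1C_2+D_1D_2\ge a_1a_2+b_1b_2.$$
Here I would split according to the sign of $a_1a_2$; set $x_i:=|a_i|$ and $y_i:=b_i\ge 0$. When $a_1a_2>0$ one has $a_1a_2=x_1x_2$ and $C_1C_2=\min\{x_1,y_1\}\min\{x_2,y_2\}$, while $D_1D_2=\max\{x_1,y_1\}\max\{x_2,y_2\}$, so the claim becomes the rearrangement inequality $\min\{x_1,y_1\}\min\{x_2,y_2\}+\max\{x_1,y_1\}\max\{x_2,y_2\}\ge x_1x_2+y_1y_2$, which for nonnegative reals is elementary (an equality when the two pairs are ordered the same way, and reducing to $(y_1-x_1)(x_2-y_2)\ge 0$ otherwise). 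When $a_1a_2\le 0$ one has $a_1a_2=-x_1x_2$ and $C_1C_2=-\min\{x_1,y_1\}\min\{x_2,y_2\}$, so the claim becomes $\max\{x_1,y_1\}\max\{x_2,y_2\}-\min\{x_1,y_1\}\min\{x_2,y_2\}\ge y_1y_2-x_1x_2$, which follows at once from the monotonicity bounds $\max\{x_i,y_i\}\ge y_i$ and $\min\{x_i,y_i\}\le x_i$.

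The routine parts are the algebraic unravelling in (\ref{forcontractive1}) and the expansion in (\ref{fordomi1}); the only place demanding genuine care is the sign bookkeeping in (\ref{fordomi1}), namely checking that the products $C_1C_2$ and $a_1a_2$ collapse to $-\min\{x_1,y_1\}\min\{x_2,y_2\}$ and $-x_1x_2$ uniformly, including across the boundary cases $a_i=0$. Once the problem is reduced to the two cross-term inequalities the remaining verifications are short, and the hypothesis $b_1,b_2\ge 0$ is used exactly once, to guarantee that $|C_i|=\min\{|a_i|,b_i\}$ and $D_i=\max\{|a_i|,b_i\}\ge 0$.
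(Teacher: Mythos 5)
Your proof is correct. Note that the paper itself gives no argument here: the lemma is stated ``whose straightforward proof is omitted'', so there is nothing to compare against. Your verification is complete and, if anything, more structured than a brute-force case check. Both reductions are sound: for (\ref{forcontractive1}), the change to mean/half-difference coordinates correctly identifies $A_i=s_i-\phi(d_i)$ and $B_i=s_i+\phi(d_i)$ with $\phi$ the $1$-Lipschitz truncation to $[-\tfrac\alpha2,\tfrac\alpha2]$ (using that $\phi$ is odd), after which the inequality collapses as you say; for (\ref{fordomi1}), the identity $C_i^2+D_i^2=a_i^2+b_i^2$ holds also when $a_i=0$ (both sides use $\min\{0,b_i\}=0$ since $b_i\ge 0$), the cross-term reduction is exact, and the two sign cases are handled correctly, including the degenerate case $a_1a_2=0$ where $C_1C_2=-\min\{x_1,y_1\}\min\{x_2,y_2\}=0$. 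The hypothesis $b_i\ge 0$ is indeed used exactly where you say it is. No gaps.
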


\begin{proposition}\label{propertiesW}
Let $u,\,v\in H^1(V)$ and let $\alpha$ be a positive real.  Denote by
\begin{equation}\label{Bezeichnung}
f:=\frac{u+v-\alpha}2,\ g:=\frac{u+v+\alpha}2.
\end{equation}
Then 
\begin{equation}\label{forcontractive2}
W((u\vee f)\wedge g)+W((v\wedge g)\vee f)\leq W(u)+W(v),
\end{equation}
and, if $v\geq0$,
\begin{equation}\label{fordomi2}
W\left((|u|\wedge v)\sgn(u)\right)+W(|u|\vee v)\leq W(u)+W(v).
\end{equation}
\end{proposition}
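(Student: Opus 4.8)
The plan is to reduce the two energy inequalities \eqref{forcontractive2} and \eqref{fordomi2} to the pointwise inequalities \eqref{forcontractive1} and \eqref{fordomi1} of Lemma~\ref{Ungleichungen}, exploiting the structure of the $m$-energy $W_m$ as a sum over edges, and then pass to the limit $m\to\infty$. First I would observe that the four functions appearing on the left-hand sides are built from $u,v$ by finitely many applications of the lattice operations $\vee,\wedge$, the map $\sgn$, multiplication, and the constant $\alpha$; by Corollary~\ref{ausLipschitz} and the defining formula \eqref{Bezeichnung}, all of them lie in $H^1(V)$, so that $W$ evaluated on them is finite and well defined. The crucial point is to match the algebraic definitions of $A_i,B_i,C_i,D_i$ in Lemma~\ref{Ungleichungen} with the functions in the proposition: setting $a_i=u$ and $b_i=v$ evaluated at two $m$-adjacent points, one checks that $A=(u\vee f)\wedge g$, $B=(v\wedge g)\vee f$ where $f,g$ are as in \eqref{Bezeichnung}, and similarly $C=(|u|\wedge v)\sgn(u)$, $D=|u|\vee v$.

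The heart of the argument is therefore the following. Fix $m\in\N$ and a pair $x,y\in V_m$ with $x\underset{m}{\sim} y$. Apply Lemma~\ref{Ungleichungen} with $a_1=u(x)$, $a_2=u(y)$, $b_1=v(x)$, $b_2=v(y)$ (and, for the second inequality, note $b_1,b_2\geq0$ follows from $v\geq0$). This gives, edge by edge,
\begin{align*}
 & \bigl(\,[(u\vee f)\wedge g](x)-[(u\vee f)\wedge g](y)\,\bigr)^2 + \bigl(\,[(v\wedge g)\vee f](x)-[(v\wedge g)\vee f](y)\,\bigr)^2 \\
 & \qquad \leq (u(x)-u(y))^2 + (v(x)-v(y))^2 .
\end{align*}
Summing this inequality over all $m$-adjacent pairs and multiplying by $\tfrac12(\tfrac{N+2}{N})^m$, the definition \eqref{energyVm} of $W_m$ yields
\[
 W_m\bigl((u\vee f)\wedge g\bigr) + W_m\bigl((v\wedge g)\vee f\bigr) \leq W_m(u) + W_m(v) .
\]
I would then let $m\to\infty$: by \eqref{defW} each term converges to the corresponding $W$, and since the inequality is preserved under the (increasing) limits, \eqref{forcontractive2} follows. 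The derivation of \eqref{fordomi2} from \eqref{fordomi1} is entirely parallel, using the hypothesis $v\geq0$ to guarantee $b_i\geq0$ pointwise.

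The one genuine obstacle is verifying the pointwise identifications, namely that the lattice-theoretic expressions $(u\vee f)\wedge g$, $(v\wedge g)\vee f$, $(|u|\wedge v)\sgn(u)$, $|u|\vee v$ coincide at each point with the quantities $A_i,B_i,C_i,D_i$ defined by the nested $\min/\max$ formulas in Lemma~\ref{Ungleichungen}. This is a purely elementary but slightly fiddly unravelling: for instance, $(u\vee f)\wedge g$ at a point equals $\min\{\max\{u,f\},g\}$, and substituting $f=\tfrac{u+v-\alpha}{2}$, $g=\tfrac{u+v+\alpha}{2}$ one must recognise this as exactly $A_1$ (with $a=u$, $b=v$). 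Because the operations in the proposition are applied to functions while those in the lemma are applied to numbers, the identification holds pointwise and hence as an identity of functions; once this is checked, everything reduces to the scalar inequalities already granted. A minor additional care is needed for the $\sgn$ factor in \eqref{fordomi2}, where one confirms $(|u|\wedge v)\sgn(u)$ matches $C=\min\{|a|,b\}\cdot\sgn(a)$ at points where $u\neq0$, and that both sides vanish where $u=0$, so the identification is complete and the summation-and-limit machinery applies unchanged.
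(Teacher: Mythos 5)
Your proposal is correct and follows essentially the same route as the paper: apply the scalar inequalities of Lemma~\ref{Ungleichungen} edge by edge on $V_m$ with $a_i=u(x_i)$, $b_i=v(x_i)$ (identifying $A_i,B_i,C_i,D_i$ with the lattice expressions evaluated pointwise), sum to get the inequality for $W_m$, and let $m\to\infty$. Nothing essential is missing.
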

\begin{proof}
Let $m\in\N$ be arbitrary. According to (\ref{forcontractive1}), we then have for every $x$, $y\in V_m$ with $x\underset{m}{\sim} y$
\begin{multline*}
\left((u(x)\vee f(x)) \wedge g(x) - (u(y) \vee f(y)) \wedge g(y) \right)^2 + \\
+ \left((v(x) \wedge g(x)) \vee f(x) - (v(y) \wedge g(y) ) \vee f(y) \right)^2 \\
\leq (u(x)-u(y))^2+(v(x)-v(y))^2.
\end{multline*}
Thus
\[
W_m((u\vee f) \wedge g)+W_m((v \wedge g) \vee f) \leq W_m(u) + W_m(v), \ \forall\ m\in\N.
\]
Taking the limit $m\to\infty$, we obtain (\ref{forcontractive2}).

One can prove in a similar way that inequality (\ref{fordomi2}) follows from (\ref{fordomi1}).
\end{proof}

\section{Laplace operators with boundary conditions}

We retain the notations from the previous section, and consider a nonzero, finite Borel measure $\mu$ on $V$ with the property that every nonempty, relatively open subset of $V$ has strictly positive $\mu$-measure, and that the intrinsic boundary $V_0$ is a $\mu$-null set. Then, by \cite[Lemma 26.2 and Theorem 29.14]{Bau01}, $C_0(V)$ and $C(V)$ are densely and continuously embedded into $L^2_\mu (V)$. By Theorem \ref{properties}, $H^1_0(V)$ and $H^1(V)$ are then densely and continuously embedded in $(L^2_\mu (V),\|\cdot\|_{L^2_\mu})$, too. Note, by a direct calculation, that 
\[
 \langle u,v\rangle_{H^1_\mu} = \langle u,v\rangle_{L^2_\mu} + \langle u,v\rangle  \quad (u, \, v\in H^1 (V))
\]
defines an inner product on $H^1 (V)$ which is equivalent to the inner product $\langle\cdot ,\cdot\rangle_{H^1}$ from assertion $3^\circ$ of Theorem \ref{properties}. \\

Given nonnegative functions $B_1$, $\dots$, $B_N : \R \to [0,\infty ]$ and setting $B = (B_1,\dots ,B_N)$, we define the perturbed energy $W_B\colon L^2_\mu (V)\to[0,\infty]$ by 
\begin{equation}\label{defperturbedenergy}
 W_B (u) := \begin{cases}
W(u) + \sum_{i=1}^N B_i (u(p_i)) & \text{if } u\in H^1 (V) , \\[2mm]
\infty & \text{else.}
\end{cases}
\end{equation}

\begin{proposition} \label{ausallgemein2}
Let $B_1$, $\dots$, $B_N : \R \to [0,\infty ]$ be lower semicontinuous, and set $B = (B_1,\dots ,B_N)$. Then the perturbed energy $W_B$ is lower semicontinuous on $L^2_\mu (V)$. 
\end{proposition}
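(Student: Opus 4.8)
The plan is to verify lower semicontinuity sequentially. Given $u_n \to u$ in $L^2_\mu (V)$, I would show $W_B (u) \le \liminf_n W_B (u_n)$. After passing to a subsequence I may assume the right-hand side is an actual limit $L$; if $L = \infty$ there is nothing to prove, so I assume $L < \infty$ and that each $W_B (u_n)$ is finite, which forces each $u_n \in H^1 (V)$. Since the $B_i$ are nonnegative we have $W(u_n) \le W_B (u_n)$, so the energies $W(u_n)$ are bounded; and since $u_n \to u$ in $L^2_\mu (V)$ the norms $\|u_n\|_{L^2_\mu}$ are bounded too. Using $\|u_n\|_{H^1_\mu}^2 = \|u_n\|_{L^2_\mu}^2 + 2 W(u_n)$ and the equivalence of $\|\cdot\|_{H^1_\mu}$ with $\|\cdot\|_{H^1}$, I conclude that $(u_n)$ is bounded in $H^1 (V)$.

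Next I would exploit the Hilbert space structure. By assertion $4^\circ$ of Theorem~\ref{properties} the space $H^1 (V)$ is reflexive, so a subsequence $(u_{n_k})$ converges weakly in $H^1 (V)$ to some $\bar u \in H^1 (V)$. The key tool is the compact embedding $H^1 (V) \hookrightarrow C(V)$ from assertion $5^\circ$: it upgrades the weak $H^1$-convergence of the bounded sequence $(u_{n_k})$ to convergence in the $\|\cdot\|_{\sup}$ norm along a further subsequence, which I do not relabel. Since the sup-norm limit must coincide with the $L^2_\mu$-limit $u$ (the embedding $C(V)\hookrightarrow L^2_\mu (V)$ being continuous), I obtain $\bar u = u$, hence $u \in H^1 (V)$, and in particular $u_{n_k}(p_i) \to u(p_i)$ for every $i$.

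It then remains to pass to the limit in the two parts of $W_B$ separately. For the energy part, $W$ is convex and continuous for $\|\cdot\|_{H^1}$, hence weakly lower semicontinuous, so $W(u) \le \liminf_k W(u_{n_k})$ --- this is precisely the argument already used in the proof of Lemma~\ref{stattpointwise}. For the boundary part, the pointwise convergence $u_{n_k}(p_i)\to u(p_i)$ together with the lower semicontinuity of each $B_i$ yields $B_i (u(p_i)) \le \liminf_k B_i (u_{n_k}(p_i))$. Adding these estimates and using the superadditivity of $\liminf$, I would conclude
\[
W_B (u) = W(u) + \sum_{i=1}^N B_i (u(p_i)) \le \liminf_k \Big( W(u_{n_k}) + \sum_{i=1}^N B_i (u_{n_k}(p_i)) \Big) = L ,
\]
which is the desired inequality.

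I expect the main obstacle to be the boundary terms: convergence in $L^2_\mu (V)$ carries no information about the values at the individual points $p_i$, which form a $\mu$-null set, so I cannot directly invoke lower semicontinuity of the $B_i$. The compact embedding into $C(V)$ of assertion $5^\circ$ is what rescues the argument, by promoting the convergence to the uniform, and hence pointwise, level. A secondary point requiring care is the identification $\bar u = u$, for which I use that the sup-norm limit and the $L^2_\mu$-limit of the same subsequence must agree.
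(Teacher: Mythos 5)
Your argument is correct and follows essentially the same route as the paper's proof: bounding $(u_n)$ in $H^1(V)$ via the nonnegativity of the $B_i$, extracting a weakly convergent subsequence, using weak lower semicontinuity of $W$ (convexity plus norm continuity), and invoking the compact embedding into $C(V)$ to get pointwise convergence at the boundary points so that the lower semicontinuity of the $B_i$ applies. The only cosmetic difference is that you identify the weak limit with $u$ through the uniform convergence, whereas the paper does so directly through the continuous embedding of $H^1(V)$ into $L^2_\mu(V)$; both are valid.
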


\begin{proof}
Let $(u_n)$ be a sequence in $L^2_\mu (V)$ which converges (in $L^2_\mu$) to some element $u\in L^2_\mu (V)$. We have to show that
\begin{equation} \label{eq.W_B}
W_B(u) \leq \liminf_{n\to\infty} W_B(u_n) .
\end{equation}
This inequality is trivially satisfied if the right-hand side equals $\infty$. So, after passing to a subsequence (denoted again by $(u_n)$), we may assume that $(W_B (u_n))_n$ is bounded. Since the functions $B_i$ are nonnegative, the sequence $(W(u_n))_n$ is thus bounded, too. Since $(u_n)$ is, as a convergent sequence, necessarily bounded in $L^2_\mu (V)$, we find that $(u_n)$ is bounded in $(H^1 (V),\|\cdot\|_{H^1_\mu})$. This space being reflexive and continuously embedded into $L^2_\mu (V)$, so that we can use a uniqueness of limit argument, we deduce that $(u_n)$ converges weakly in $H^1 (V)$ to $u$. Since $W$ is continuous on $H^1 (V)$ and convex,  it is lower semicontinuous with respect to the weak topology on $H^1 (V)$. Hence,
\[
W(u) \leq \liminf_{n\to\infty} W(u_n) .
\]
Now, using the compactness of the embedding of $(H^1 (V),\|\cdot\|_{H^1_\mu})$ into $(C(V),\|\cdot\|_{\sup})$ (assertion $5^\circ$ of Theorem \ref{properties}), we see that $(u_n)$ converges uniformly to $u$. In particular, $(u_n)$ converges pointwise everywhere. 
Moreover, since the functions $B_i$ are lower semicontinuous by assumption, we obtain
\[
 \sum_{i=1}^N B_i (u(p_i)) \leq \liminf_{n\to\infty} \sum_{i=1}^N B_i (u_n(p_i)) .
\]
Taking the preceding two estimates together, we obtain (\ref{eq.W_B}), and thus $W_B$ is lower semicontinuous on $L^2_\mu (V)$.
\end{proof}

Recall from \cite[Th\'eor\`eme 3.2]{Br73} that every proper, convex, lower semicontinuous energy $\tilde{W} : H\to [0,\infty ]$ on a real Hilbert space $H$ generates a strongly continuous semigroup of (possibly nonlinear) contractions on the closure of the effective domain ${\rm dom}\, \tilde{W} := \{\tilde{W} <\infty\}$. More precisely, if we define the {\em subgradient} of $\tilde{W}$ at a point $u\in {\rm dom}\,\tilde{W}$ by
\[
 \partial\tilde{W} (u) := \{ f\in H \mid \tilde{W} (u+v) - \tilde{W} (u) \geq \langle f,v\rangle, \ \forall v\in H \} ,  
\]
then, for every initial value $u_0$ in the closure of the effective domain ${\rm dom}\,\tilde{W}$, there exists a unique function $u\in C(\R_+ ;H ) \cap H^1_{loc} ((0,\infty );H)$ such that 
\begin{equation} \label{cauchy}
 \begin{cases}
  \dot u + \partial\tilde{W} (u) \ni 0 & \text{almost everywhere on } (0,\infty ) , \\[2mm]
  u(0) = u_0 . & 
 \end{cases}
\end{equation}
Defining $S(t)u_0 := u(t)$, we obtain the strongly continuous semigroup of contractions mentioned above. We say that a strongly continuous semigroup $S$ on an $L^2$ space is 
\begin{itemize}
 \item[(i)] {\em positive} if $S(t)u\geq 0$ for every $t\geq 0$ and every $u\geq 0$,
 \item[(ii)] {\em order preserving} if  $S(t)u\leq S(t)v$ for every $t\geq 0$ and every $u$, $v\in L^2$ with $u\leq v$, 
 \item[(iii)] {\em $L^\infty$-contractive} if $\| S(t) u - S(t)v\|_{\sup} \leq \| u-v\|_{\sup}$ for every $t\geq 0$ and  every $u$, $v\in L^2$.
\end{itemize}
Given two strongly continuous semigroups $S_1$, $S_2$ on $L^2$, we say that
\begin{itemize}
 \item[(iv)] $S_1$ {\em is dominated by } $S_2$ (and we write $S_1 \preccurlyeq S_2$) if $S_2$ is order preserving and $| S_1(t)u| \leq S_2 (t)v$ for all $t\geq 0$ and all $u$, $v\in L^2$ such that $|u|\leq v$, and
 \item[(v)] $S_1$ {\em is totally dominated by } $S_2$ (and we write $S_1 \stackrel{t}{\preccurlyeq}S_2$) if $S_1 \preccurlyeq S_2$ and $S_1 \preccurlyeq -S_2 (-\cdot )$.
\end{itemize}
Note that $S_1$ is totally dominated by $S_2$ if and only if $S_2$ is order preserving and for all $t\geq 0$ and all $u$, $v\in L^2$
\[
 |S_1 (t)u| \leq \begin{cases}
                  S_2 (t)v & \text{if } |u| \leq v , \\[2mm]
                  -S_2 (t)(v) & \text{if } v \leq -|u| .
                 \end{cases}
\]
Total domination of $S_1$ by $S_2$ corresponds to a domination of $S_1$ by $S_2$ from above, that is, in the sense of definition (iv), and to a domination of $S_1$ by $S_2$ from below. Note that if the semigroup $S_2$ is antisymmetric with respect to the origin in the sense that $S_2 (-\cdot ) = -S_2 (\cdot )$, then the domination $S_1\preccurlyeq S_2$ is equivalent to total domination $S_1\stackrel{t}{\preccurlyeq} S_2$. Note also that linear semigroups are antisymmetric. 

If the semigroup $S$ (or, the semigroups $S_1$ and $S_2$) are generated by convex, lower semicontinuous energies on $L^2$, then the above properties can be fully characterised via the generating energies; see \cite{By96,Br73,BrPa70,CiGr03}, but also \cite{Ou04} in the case of {\em quadratic} energies and linear semigroups. In particular, one obtains Theorem \ref{thm.qualitative} below for which we introduce the following notions.
A function $B: \R\to [0,\infty ]$ is called {\em bi-monotone} if it is decreasing on $\R_-$ and increasing on $\R_+$, and it is called {\em normalised} if $B(0)=0$. Note that a bi-monotone function attains its minimum at $0$, and that  normalised functions are proper. 

\begin{theorem} \label{thm.qualitative}
Let $B_1$, $\dots$, $B_N : \R\to [0,\infty ]$ be normalised functions, and set $B= (B_1,\dots , B_N)$. Assume that the energy $W_B$ is lower semicontinuous and convex, and let $S_B = (S_B(t))_{t\geq 0}$ be the semigroup generated by $W_B$. Then:
\begin{itemize}
 \item[$1^\circ$] The semigroup $S_B$ is positive and order preserving.
 \item[$2^\circ$] If $B$ is convex, then the semigroup $S_B$ is $L^\infty$-contractive.
\end{itemize}
Moreover, let $\hat{B}_1$, $\dots$, $\hat{B}_N: \R\to [0,\infty ]$ be a second set of normalised functions, $\hat{B} = (\hat{B}_1,\dots , \hat{B}_N)$. Assume also that $W_{\hat{B}}$ is lower semicontinuous and convex, and let $S_{\hat{B}} = (S_{\hat{B}} (t))_{t\geq 0}$ be the semigroup generated by $W_{\hat{B}}$. Then:
\begin{itemize}
 \item[$3^\circ$] If $s\mapsto \hat{B}(s) - B(|s|)$ is bi-monotone (with the interpretation $\infty - \infty = \infty$), then $S_{\hat{B}}\preccurlyeq S_B$.
 \item[$4^\circ$] If $s\mapsto \hat{B}(s) - B(|s|)$ and $s\mapsto \hat{B}(s) - B(-|s|)$ are bi-monotone (with the same interpretation as above), then $S_{\hat{B}}\stackrel{t}{\preccurlyeq} S_B$
\end{itemize}
\end{theorem}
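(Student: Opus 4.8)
The plan is to treat each of the four assertions by invoking the characterisation of the corresponding semigroup property as an inequality on the generating energy (see \cite{CiGr03}, and also \cite{Br73,BrPa70,By96}), and then to verify that inequality for $W_B$. Since $W_B=W+\sum_{i=1}^N B_i(\,\cdot\,(p_i))$ on $H^1(V)$ by \eqref{defperturbedenergy}, and since Proposition \ref{orthogonal} and Proposition \ref{propertiesW} already establish the relevant inequalities for the interior energy $W$, in each case the remaining work reduces to checking the finite boundary sum $\sum_{i=1}^N B_i$ pointwise at the vertices $p_1,\dots,p_N$, where all the lattice operations act coordinatewise.

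For $1^\circ$ I would first prove order preservation through the submodularity inequality $W_B(u\vee v)+W_B(u\wedge v)\le W_B(u)+W_B(v)$. Its $W$-part is an equality by assertion $3^\circ$ of Proposition \ref{orthogonal}, while at each vertex one has $\{(u\vee v)(p_i),(u\wedge v)(p_i)\}=\{u(p_i),v(p_i)\}$ as a pair, so that $B_i((u\vee v)(p_i))+B_i((u\wedge v)(p_i))=B_i(u(p_i))+B_i(v(p_i))$; hence the inequality holds with equality. Positivity then comes for free: since each $B_i$ is normalised, the constant $0$ minimises $W_B$, so $0\in\partial W_B(0)$ and the Cauchy problem \eqref{cauchy} has the stationary solution $0$, giving $S_B(t)0=0$; applying order preservation to $0\le u$ yields $0=S_B(t)0\le S_B(t)u$.

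For $2^\circ$ I would use the characterisation of $L^\infty$-contractivity: for all $u,v$ and all $\alpha>0$, with $f,g$ as in \eqref{Bezeichnung}, one needs $W_B((u\vee f)\wedge g)+W_B((v\wedge g)\vee f)\le W_B(u)+W_B(v)$. The interior part is precisely \eqref{forcontractive2}. For the boundary part, the values at $p_i$ are the quantities $A_i,B_i$ of Lemma \ref{Ungleichungen}, which satisfy $A_i+B_i=u(p_i)+v(p_i)$ and lie between $\min$ and $\max$ of $u(p_i),v(p_i)$; thus the pair $(u(p_i),v(p_i))$ majorises $(A_i,B_i)$, and as $B_i$ is convex, hence Schur-convex on pairs, one gets $B_i(A_i)+B_i(B_i)\le B_i(u(p_i))+B_i(v(p_i))$. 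Summing over $i$ and adding \eqref{forcontractive2} yields the criterion.

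For $3^\circ$ and $4^\circ$ I would use the domination criterion: $S_{\hat B}\preccurlyeq S_B$ holds provided $S_B$ is order preserving (granted by $1^\circ$) and, for all $u$ and all $v\ge 0$, $W_{\hat B}\big((|u|\wedge v)\sgn u\big)+W_B(|u|\vee v)\le W_{\hat B}(u)+W_B(v)$; the interior part is \eqref{fordomi2}. For the boundary part I would substitute $\hat B_i(s)=\psi_i(s)+B_i(|s|)$ with $\psi_i(s):=\hat B_i(s)-B_i(|s|)$; since $\{|C_i|,D_i\}=\{|u(p_i)|,v(p_i)\}$ the $B_i$-contributions cancel and the inequality collapses to $\psi_i(C_i)\le\psi_i(u(p_i))$, which holds because $C_i=\min\{|u(p_i)|,v(p_i)\}\sgn(u(p_i))$ has $|C_i|\le|u(p_i)|$ and the same sign as $u(p_i)$ (or $C_i=0$, where the bi-monotone $\psi_i$ attains its minimum). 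For $4^\circ$ I would note that $-S_B(-\cdot)$ is generated by $W_{B^-}$ with $B^-_i(s):=B_i(-s)$ (using $W(-u)=W(u)$), so that $S_{\hat B}\preccurlyeq -S_B(-\cdot)$ follows by the same argument with $\psi_i$ replaced by $s\mapsto\hat B_i(s)-B_i(-|s|)$, whose bi-monotonicity is the second hypothesis; combined with $3^\circ$ this gives $S_{\hat B}\stackrel{t}{\preccurlyeq}S_B$. The main obstacle here is not the interior energy, which Propositions \ref{orthogonal} and \ref{propertiesW} were designed to handle, but invoking the characterisations of \cite{CiGr03} in exactly the right form (convexity and lower semicontinuity of both energies, order preservation of the dominating semigroup, admissible domains) together with the two pointwise facts above, while carefully respecting the convention $\infty-\infty=\infty$ when a boundary value leaves the finiteness domain of some $B_i$ or $\hat B_i$.
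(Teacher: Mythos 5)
Your proposal is correct and follows essentially the same route as the paper: the same characterisations of order preservation, positivity, $L^\infty$-contractivity and domination via inequalities on the generating energies (from \cite{By96} and \cite{CiGr03}), the interior terms handled by Propositions \ref{orthogonal} and \ref{propertiesW}, and the boundary terms checked pointwise at the vertices (your Schur-convexity and $\psi_i$-decomposition arguments simply make explicit what the paper dismisses as ``easily seen''). The positivity argument via the equilibrium $S_B(t)0=0$ and the reduction of $4^\circ$ to the fact that $-S_B(-\cdot)$ is generated by $W_B(-\cdot)$ also coincide with the paper's proof.
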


\begin{proof}
 By the assumptions  on $W_B$, this energy  really generates a strongly continuous semigroup of contractions on the closure of its effective domain. However, note that $H^1_0 (V) \subseteq {\rm dom}\, W_B$, and recall that $H^1_0 (V)$ is dense in $L^2_\mu (V)$. Hence, the semigroup $S_B$ is defined on the entire space $L^2_\mu (V)$. Let us turn to prove the required qualitative properties.

{\em Order preservingness.} By \cite[Corollaire~2.2]{By96}, the semigroup $S_B$ is order preserving if and only if, for all $u$, $v\in L^2_\mu (V)$,
\begin{equation}\label{order}
W_B(u\wedge v)+W_B(u\vee v)\leq W_B(u)+W_B(v).
\end{equation}
The above inequality clearly holds if $u$ or $v$ do not belong to the effective domain ${\rm dom}\, W_B$. Thus we may assume that $u$ and $v$ both lie in ${\rm dom}\, W_B$. In particular, $u$ and $v$ are in $H^1(V)$. For every $i\in\{1,\dots,N\}$ we clearly have
\[
B_i ((u\wedge v)(p_i)) + B_i ((u\vee v)(p_i)) = B_i (u(p_i)) + B_i (v(p_i)) . 
\]
Combining this with assertion $3^\circ$ of Proposition \ref{orthogonal}, we obtain (\ref{order}), even with equality.

{\em Positivity.} By \cite[Th\'eor\`eme 1.9]{By96} applied to the convex set $L^2_\mu (V)^+$, the semigroup $S_B$ is positive if and only if, for every $u\in L^2_\mu (V)$,
\begin{equation} \label{positive}
 W_B (u^+) \leq W_B(u) .
\end{equation}
This inequality, however, follows directly from (\ref{order}) by putting $v=0$ and noting that $W_B (u^+ ) = W_B (u\vee 0)$, $W_B(u \wedge 0) \geq 0$ and $W_B (0)=0$. 

Alternatively, one may argue that the origin is a global minimum of $W_B$ and thus an equilibrium point in the sense that $S(t)0 = 0$ for every $t\geq 0$. Now the positivity follows from the first part, namely that $S_B$ is order preserving.

{\em $L^\infty$-contractivity.} We assume now that $B$ is convex and prove that $S_B$ is $L^\infty$-contractive. For this we use \cite[Lemma 3.3 and Theorem 3.6]{CiGr03}, that is, we have to show that for every $u$, $v\in L^2_\mu (V)$ and for every $\alpha>0$ one has that 
\begin{equation}\label{contractive}
W_B((u\vee f)\wedge g)+W_B((v\wedge g)\vee f)\leq W_B(u)+W_B(v),
\end{equation}
where $f$ and $g$ are defined in (\ref{Bezeichnung}). As above, we may assume that $u$ and $v$ lie in ${\rm dom}\, W_B\subseteq H^1 (V)$.  It can be easily seen, using the convexity of the functions $B_i$, that for every $i\in\{1,\dots,N\}$,
\begin{multline*}
B_i \left(\min\{\max\{u(p_i),f(p_i)\},g(p_i)\}\right) + B_i \left(\max\{\min\{v(p_i),g(p_i)\},f(p_i)\}\right) \\
\leq B_i(u(p_i)) + B_i (v(p_i)).
\end{multline*}
Hence, using (\ref{forcontractive2}), we get (\ref{contractive}).

{\em Domination.} Assume that $s\mapsto \hat{B}(s) - B(|s|)$ is bi-monotone. In order to show that $S_{\hat{B}}\preccurlyeq S_B$, we have to check, according to \cite[Th\'eor\`eme~3.3]{By96}, that for all $u$, $v\in L^2_\mu (V)$ with $v\geq 0$ 
\begin{equation}\label{domi}
W_{\hat{B}}\left((|u|\wedge v)\sgn(u)\right)+W_B(|u|\vee v) \leq W_{\hat{B}}(u) + W_B(v).
\end{equation}
As above, we may assume that $u\in {\rm dom}\, W_{\hat B}$ and $v\in {\rm dom}\, W_B$; in particular, $u$, $v\in H^1 (V)$. Since $\hat{B}_i-B_i$ is bi-monotone, it can be easily seen that, for every $i\in\{1,\dots,N\}$, 
\[
\hat B_i\left(((|u|\wedge v)\sgn(u)) (p_i)\right) + B_i \left( (|u|\vee v)(p_i)\right) \leq \hat B_i (u(p_i)) + B_i (v(p_i)).
\]
So, using (\ref{fordomi2}), we obtain (\ref{domi}).

{\em Total domination.} This point follows from the preceding point and the observation that the semigroup $-S_B (-\cdot )$ is generated by the convex, lower semicontinuous energy $W_B (-\cdot )$. 
\end{proof}

At the end of this section we discuss the type of operators which we obtain as subgradients of energies of the form $W_B$. First, we introduce the Laplace operator $\Delta_\mu$ on $L^2_\mu (V)$ by
\begin{align*}
 {\rm dom}\, \Delta_\mu & := \{ u\in H^1 (V) | \exists f\in L^2_\mu (V) \forall v\in H^1_0 (V) \, \langle u,v\rangle = \langle f,v\rangle_{L^2_\mu} \} , \\
 -\Delta_\mu u & := f .
\end{align*}
Note that this operator as well as the space on which it acts both depend on the Borel measure $\mu$. In the special case when $\mu = \mathcal H^d / \mathcal H^d (V)$ is the normalised $d$-dimensional Hausdorff measure, where $d = \log N / \log 2$ is the Hausdorff dimension of the Sierpinski gasket, the operator $\Delta_\mu$ is the Laplace operator introduced by Kigami \cite{Ki89,Ki93}; we denote it simply by $\Delta$. Kigami also defined the normal derivate $\frac{\partial u}{\partial\nu}$ of a function $u\in {\rm dom}\, \Delta$ on the intrinsic boundary; see \cite{Ki89}. In each point of the intrinsic boundary it can be defined as a limit of a sort of difference quotients of the function $u$ on the subgraphs $V_m$. It turns out that $u\in {\rm dom}\, \Delta$ is a solution of the boundary value problem
\begin{align*}
 -\Delta u & = f \text{ on } V , \\
 \frac{\partial u}{\partial\nu} & = g \text{ on } V_0 ,
\end{align*}
if and only if 
\[
 \langle u , v\rangle = \langle f,v\rangle_{L^2} + \sum_{i=1}^N g(p_i) v(p_i ) \text{ for every } v\in H^1 (V) ; 
\]
observe that the test functions $v$ now run through $H^1 (V)$ while in the definition of $\Delta$ they only vary in $H^1_0 (V)$. Let now $B_1$, $\dots$, $B_N :\R\to [0,\infty ]$ be continuously differentiable functions and put $B= (B_1, \dots , B_N )$. Then, for every $u$, $v\in H^1 (V)$
\[
 \lim_{\lambda\to 0} \frac{W_B (u+\lambda v) - W_B (u)}{\lambda} = \langle u,v\rangle + \sum_{i=1}^N B_i' (u(p_i)) v(p_i ) .
\]
As a consequence, $f\in\partial W_B (u)$ if and only if
\begin{align*}
 -\Delta u & = f \text{ on } V , \\
 \frac{\partial u}{\partial\nu} (p_i) + B_i' (u(p_i)) & = 0 \text{ for every } i\in \{ 1, \dots , N\} ,
\end{align*}
that is, $u\in {\rm dom}\,\Delta$ is a solution of a Poisson equation with Robin boundary conditions. The Cauchy problem \eqref{cauchy} associated with a lower semicontinuous, convex energy $W_B$, now for a general Borel measure $\mu$ and general lower semicontinuous, convex functions $B_i$ can thus be rewritten formally in the form
\begin{align*}
 \partial_t u - \Delta_\mu u & = 0 \text{ in } \R_+ \times V , \\
  \frac{\partial u}{\partial\nu} (p_i) + \partial B_i (u(p_i)) & \ni 0 \text{ for every } i\in \{ 1, \dots , N\} .
\end{align*}
This is a diffusion type equation in the space $L^2_\mu (V)$ associated with the Laplace type operator $\Delta_\mu$ and with general Robin type boundary conditions. 
 
\section{What is in between?}

Within the class of energies $W_B$ associated with lower semicontinuous, bi-mo\-no\-tone and normalised functions $B_i$ and with respect to the order given by comparison pointwise everywhere the energy $W$ (extended to  $L^2_\mu (V)$ by $\infty$) is the smallest energy, while $W_\infty := W_{(\infty , \dots ,\infty)}$ associated with the functions 
\[
 B^\infty_i (u) = \begin{cases}
            \infty & \text{if } u\not= 0 , \\[2mm]
            0 & \text{if } u=0 ,
           \end{cases}
\]
(which are not precisely equal to $\infty$ in order to achieve normalization) is the largest energy. Note that $W$ and $W_\infty$ are lower semicontinuous and convex. Denote by $S$ and $S_\infty$ the semigroups generated by $W$ and $W_\infty$, respectively. For any $N$-tuple $B= (B_1 , \dots , B_N)$ of lower semicontinuous, bi-monotone and normalised functions $B_i : \R\to [0,\infty ]$ one has
\[
 W \leq W_B \leq W_\infty \text{ pointwise everywhere.}
\]
If $W_B$ is convex, then Theorem \ref{thm.qualitative} yields that the associated semigroups $S$, $S_B$ and $S_\infty$ satisfy the opposite relation in the sense of total domination. More precisely one obtains the following result.

\begin{corollary} \label{aus.thm.qualitative}
Let $B_1$, $\dots$, $B_N: \R\to [0,\infty ]$ be lower semicontinuous, bi-monotone and normalised functions, $B= (B_1,\dots , B_N)$. Assume that the energy $W_B$ is convex, and let $S_B = (S_B(t))_{t\geq 0}$ be the semigroup generated by $W_B$. Then $S_B$ is positive, order preserving and
\[
 S_\infty \stackrel{t}\preccurlyeq S_B \stackrel{t}\preccurlyeq S .
\]
\end{corollary}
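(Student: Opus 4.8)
The plan is to derive Corollary \ref{aus.thm.qualitative} as a direct application of Theorem \ref{thm.qualitative}, using the three special tuples $B$, $(B^\infty_i)$, and the zero tuple $(0,\dots,0)$ that generate $S_B$, $S_\infty$ and $S$ respectively. The positivity and order preservingness of $S_B$ are immediate from assertion $1^\circ$ of Theorem \ref{thm.qualitative}, since the $B_i$ are assumed normalised; note that the hypotheses of Theorem \ref{thm.qualitative} require only normalisation, lower semicontinuity and convexity of $W_B$, all of which are present here. The substantive content is therefore the two total domination relations $S_\infty \stackrel{t}{\preccurlyeq} S_B$ and $S_B \stackrel{t}{\preccurlyeq} S$, which I would obtain from assertion $4^\circ$.

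First I would verify $S_B \stackrel{t}{\preccurlyeq} S$. Here $S$ is generated by $W = W_{(0,\dots,0)}$, so in the notation of Theorem \ref{thm.qualitative} the upper semigroup corresponds to the tuple $B$ and the lower, dominated semigroup to the zero tuple; that is, I apply assertion $4^\circ$ with the roles $\hat{B} = B$ and $B_{\mathrm{lower}} = (0,\dots,0)$. The two functions to check are then $s\mapsto B(s) - 0 = B(s)$ and $s\mapsto B(s) - 0 = B(s)$, evaluated componentwise. Each $B_i$ is bi-monotone by hypothesis, so both functions are bi-monotone, and assertion $4^\circ$ yields $S_B \stackrel{t}{\preccurlyeq} S$. (One should also record that $W$ and $W_\infty$ are lower semicontinuous and convex, as already noted in the excerpt, so the standing hypotheses of Theorem \ref{thm.qualitative} are met for the comparison semigroups as well.)

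Next I would verify $S_\infty \stackrel{t}{\preccurlyeq} S_B$, applying assertion $4^\circ$ now with the dominating tuple being $B$ and the dominated tuple being $(B^\infty_1,\dots,B^\infty_N)$; thus $\hat{B} = (B^\infty_i)$ and $B_{\mathrm{lower}} = B$. The functions to check are $s\mapsto B^\infty_i(s) - B_i(|s|)$ and $s\mapsto B^\infty_i(s) - B_i(-|s|)$, with the convention $\infty - \infty = \infty$. For $s\neq 0$ one has $B^\infty_i(s) = \infty$, so each difference equals $\infty$; at $s=0$, using $B^\infty_i(0)=0$ and $B_i(0)=0$ (normalisation), each difference equals $0$. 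Hence both functions take the value $0$ at the origin and $+\infty$ elsewhere, which is trivially decreasing on $\R_-$ and increasing on $\R_+$, i.e. bi-monotone. Assertion $4^\circ$ then gives $S_\infty \stackrel{t}{\preccurlyeq} S_B$. Combining the two relations produces the chain $S_\infty \stackrel{t}{\preccurlyeq} S_B \stackrel{t}{\preccurlyeq} S$.

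I do not expect a genuine obstacle here, since the corollary is a specialisation of Theorem \ref{thm.qualitative}. The only points demanding a little care are bookkeeping ones: correctly matching which tuple plays the dominating and which the dominated role in each invocation of assertion $4^\circ$ (the dominated semigroup is attached to $\hat{B}$), and handling the $\infty - \infty = \infty$ convention at $s=0$ for the $S_\infty$ comparison so that the relevant differences are genuinely bi-monotone rather than merely formally so. One subtlety worth stating explicitly is that $W_\infty$ is convex and lower semicontinuous and that its effective domain $H^1_0(V)$ is dense in $L^2_\mu(V)$, so that $S_\infty$ is indeed a semigroup on the whole space; this is exactly the situation already handled at the start of the proof of Theorem \ref{thm.qualitative}.
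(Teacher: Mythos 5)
Your proposal is correct and follows the paper's proof exactly: the corollary is obtained by feeding the tuples $B$, $(0,\dots,0)$ and $(B^\infty_1,\dots,B^\infty_N)$ into Theorem \ref{thm.qualitative}, and your role assignments (the dominated semigroup attached to $\hat{B}$) and the bi-monotonicity checks for assertion $4^\circ$ are all right. The only step you gloss over is that lower semicontinuity of $W_B$ is not among the corollary's hypotheses but must be deduced from the lower semicontinuity of the $B_i$ via Proposition \ref{ausallgemein2} --- which is precisely the one explicit step in the paper's own proof.
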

\begin{proof}
Proposition \ref{ausallgemein2} implies that $W_B$ is lower semicontinuous. The assertion follows now from Theorem \ref{thm.qualitative}.
\end{proof}

In this section we discuss a partial converse of Corollary \ref{aus.thm.qualitative}. More exactly,
the question arises whether there are more semigroups sandwiched between $S_\infty$ and $S$. In this section we answer this question in the negative, at least if we  restrict ourselves to {\em local} semigroups. Loosely speaking, an operator $A$ on a function space is {\em local} if the support of $Au$ is contained in the support of $u$, for all $u$ in the domain of the operator. In the case of the subgradient of a convex, lower semicontinuous functional $\tilde{W}$ this can be written as
\[
 {\rm supp}\, f \subseteq {\rm supp}\, u \text{ whenever } f\in\partial\tilde{W} (u) .
\]
This property can be translated back to the functional $\tilde{W}$ itself. Actually, we say that the functional $\tilde{W} : L^2_\mu (V)\to [0,\infty ]$ is {\em local} if, for every $u$, $v\in L^2_\mu (V)$,
\[
 \tilde{W} (u+v) = \tilde{W} (u) + \tilde{W} (v) \text{ whenever } |u|\wedge |v| = 0 .
\]
For obvious reasons, such functionals are sometimes also called {\em additive}. 

\begin{lemma}[Locality of the energies]
For every $N$-tuple $B=(B_1,\dots , B_N)$ of lower semicontinuous, bi-monotone, normalised functions $B_i:\R\to [0,\infty ]$ the energy $W_B$ is local.
\end{lemma}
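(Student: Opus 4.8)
The plan is to treat the two ingredients of $W_B$ separately -- the free energy $W$ and the boundary penalty $\sum_{i=1}^N B_i(\cdot(p_i))$ -- and to establish additivity of each over the splitting $u+v$. So I fix $u,v\in L^2_\mu(V)$ with $|u|\wedge|v|=0$ and single out the essential case $u,v\in H^1(V)=\mathrm{dom}\,W_B$. First I would dispose of the case where one of the two functions, say $u$, leaves $H^1(V)$ while $v\in H^1(V)$: then $W_B(u)+W_B(v)=\infty$, and since $H^1(V)$ is a vector space (Theorem \ref{properties}, $1^\circ$), $u=(u+v)-v\notin H^1(V)$ forces $u+v\notin H^1(V)$, so $W_B(u+v)=\infty$ as well, and the two sides agree.

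Now suppose $u,v\in H^1(V)$. By Theorem \ref{properties} these are continuous, so $\{\,|u|>0\,\}\cap\{\,|v|>0\,\}$ is relatively open; being $\mu$-null (by $|u|\wedge|v|=0$) and $\mu$ charging every nonempty relatively open set, it is empty. Hence $|u(x)|\wedge|v(x)|=0$ holds not merely $\mu$-almost everywhere but at \emph{every} $x\in V$, in particular at each $p_i\in V_0$. This upgrade from the $L^2_\mu$-hypothesis to pointwise information is exactly what is needed to control the boundary terms. For the energy part, assertion $1^\circ$ of Proposition \ref{orthogonal} applies verbatim and yields $W(u+v)=W(u)+W(v)$. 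For the boundary part, fix $i$ and assume without loss of generality $u(p_i)=0$; then $(u+v)(p_i)=v(p_i)$, and normalisation ($B_i(0)=0$) gives
\[
 B_i\big((u+v)(p_i)\big)=B_i\big(v(p_i)\big)=B_i\big(u(p_i)\big)+B_i\big(v(p_i)\big).
\]
Summing over $i$ and combining with the energy identity gives, through \eqref{defperturbedenergy}, the desired $W_B(u+v)=W_B(u)+W_B(v)$. Note that only the normalisation of the $B_i$ enters this computation.

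The step I expect to be the main obstacle is the reduction in the degenerate case where \emph{neither} $u$ nor $v$ lies in $H^1(V)$: the right-hand side is then $\infty$, and one must still secure $W_B(u+v)=\infty$, i.e. $u+v\notin H^1(V)$. This is genuinely delicate, because an $L^2_\mu$-splitting $u+v$ subject only to $|u|\wedge|v|=0$ need not respect $H^1$-regularity (a continuous function can be written as a sum of two discontinuous pieces supported on complementary sets of positive measure), so at bottom the locality assertion is a statement about the effective domain of $W_B$. I would therefore either state and use the identity for $u,v\in\mathrm{dom}\,W_B$, or observe that it is precisely this domain version that is invoked in the later characterisation of local semigroups; in all cases the substantive content -- additivity of both $W$ and the boundary penalty -- is the computation of the preceding paragraph.
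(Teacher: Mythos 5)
Your main computation coincides with the paper's: for $u,v\in H^1(V)$ you upgrade $|u|\wedge|v|=0$ from $\mu$-a.e.\ to everywhere via continuity and the fact that $\mu$ charges nonempty relatively open sets (the paper does this tacitly when it writes ``$u\cdot v=0$''), then apply assertion $1^\circ$ of Proposition~\ref{orthogonal} to the energy and the normalisation $B_i(0)=0$ to the boundary terms. Your version is in fact slightly tidier: the identity $B_i((u+v)(p_i))=B_i(u(p_i))+B_i(v(p_i))$, as you derive it, is valid in $[0,\infty]$ and therefore absorbs the sub-case $B_i(u(p_i))=\infty$, which the paper treats separately. (One small slip: ${\rm dom}\, W_B$ need not equal $H^1(V)$ when the $B_i$ take the value $\infty$, but your argument only uses $u,v\in H^1(V)$, so nothing is affected.) The mixed case, one summand in $H^1(V)$ and the other not, is also correctly settled by your vector-space observation, which does not even need $|u|\wedge|v|=0$.

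The case you flag as delicate --- both $u\notin H^1(V)$ and $v\notin H^1(V)$ --- is indeed the sore point, and your suspicion is justified. The paper disposes of it with the bare assertion that $|u|\wedge|v|=0$ together with $u\notin H^1(V)$ or $v\notin H^1(V)$ forces $u+v\notin H^1(V)$; this is false when both summands fail to lie in $H^1(V)$. Concretely, pick a Borel set $A\subseteq V$ with $0<\mu(A)<\mu(V)$ and set $u=\mathbf{1}_A$, $v=\mathbf{1}_{V\setminus A}$. Then $|u|\wedge|v|=0$, and neither $u$ nor $v$ is $\mu$-a.e.\ equal to a continuous function (a continuous function that is $\{0,1\}$-valued on a dense set is $\{0,1\}$-valued everywhere, hence constant on the connected set $V$, forcing $\mu(A)=0$ or $\mu(V\setminus A)=0$); so $W_B(u)+W_B(v)=\infty$. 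Yet $u+v=\mathbf{1}_V\in H^1(V)$ and $W_B(\mathbf{1}_V)=\sum_{i=1}^N B_i(1)$, which is finite for, say, $B_i(s)=s^2$. So the locality identity in its literal, all-of-$L^2_\mu(V)$ formulation can fail, exactly for the reason you describe, and no proof of that case can be supplied. Your proposed remedy is the correct one: the identity does hold whenever at least one of $u,v$ belongs to $H^1(V)$, and in the subsequent theorem locality is only ever invoked for finite-energy summands (the pieces $u^+$ and $-u^-$, and the harmonic pieces $u(p_i)u_i$), so the restricted statement is all that is actually needed.
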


\begin{proof}
Let $u$, $v\in L^2_\mu (V)$ be such that $|u|\wedge |v| = 0$. Assume first that $W_B (u) = \infty$ or $W_B(v) =\infty$. If $W_B (u) = \infty$, then $u\not\in H^1 (V)$ or, if $u\in H^1 (V)$, $B_i (u(p_i)) = \infty$ for some $i\in \{1,\dots ,N\}$, and similarly for $v$ if $W_B (v) =\infty$.  

If $u\not\in H^1 (V)$ or $v\not\in H^1 (V)$, then $|u|\wedge |v|=0$ implies $u+v\not\in H^1 (V)$. In this case, $W_B (u+v) = W_B (u) +W_B(v) (=\infty)$. 

If $u$, $v\in H^1 (V)$ and $B_i (u(p_i)) = \infty$ for some $i\in \{1,\dots ,N\}$, then $u(p_i)\not= 0$ and $|u|\wedge |v| = 0$ implies $v(p_i ) = 0$. Therefore, $B_i (u(p_i)+v(p_i)) = B_i (u(p_i)) = \infty$, and therefore $W_B (u+v) = W_B (u) +W_B(v) (=\infty )$. By symmetry, this equality also holds if $B_i (v (p_i))=\infty$ for some $i\in \{1,\dots ,N\}$.  

So assume finally that $u$ and $v$ both belong to the effective domain ${\rm dom}\, W_B$. In particular, $u$, $v\in H^1 (V) \subseteq C(V)$. Since $u\cdot v=0$, and since the $B_i$ are normalised, we have for every $i\in\{1,\dots,N\}$ that
\[
B_i (u(p_i) + v(p_i)) = B_i (u(p_i)) + B_i (v(p_i)) ,
\]
so the statement in this case follows from assertion $1^\circ$ of Proposition \ref{orthogonal}.
\end{proof}

\begin{theorem}
Let $\tilde{S}$ be a strongly continuous, order preserving semigroup of contractions on $L^2_\mu (V)$. Assume that $\tilde{S}$ is generated by a convex, proper, lower semicontinuous, local function $\tilde{W} : L^2_\mu (V) \to [0,\infty ]$ such that $\tilde{W} (0)=0$. Assume that
\[
 S_\infty \stackrel{t}{\preccurlyeq} \tilde{S} \stackrel{t}{\preccurlyeq} S .
\]
Then there exist lower semicontinuous, bi-monotone, normalised functions $B_1$, $\dots$, $B_N : \R\to [0,+\infty ]$ such that $\tilde{W} = W_B$, that is,
\[
 \tilde{W} (u) = \begin{cases}
                  W (u) + \sum_{i=1}^N B_i (u(p_i)) & \text{if } u\in H^1 (V) , \\[2mm]
                  \infty & \text{else.}
                 \end{cases}
\]
\end{theorem}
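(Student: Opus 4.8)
The plan is to study the single functional $R := \tilde{W} - W$ and to prove that it factors through the trace map $u \mapsto (u(p_1),\dots ,u(p_N))$. First I would record what the two dominations give through the characterisation in \cite{By96}. Applying the domination inequality for $\tilde{S}\preccurlyeq S$ (part of the hypothesis $\tilde{S}\stackrel{t}{\preccurlyeq}S$) to the test pair $(u,0)$, and using $\tilde{W}(0)=W(0)=0$, yields $W(|u|)\leq \tilde{W}(u)$ for every $u\in L^2_\mu (V)$. Writing $u=u^+-u^-$, applying this to $u^+$ and to $-u^-$ (whose parts satisfy $|u^+|\wedge |u^-|=0$), and using the locality of $\tilde{W}$ together with Proposition \ref{orthogonal}, gives $\tilde{W}(u)=\tilde{W}(u^+)+\tilde{W}(-u^-)\geq W(u^+)+W(u^-)=W(u)$. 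In particular ${\rm dom}\,\tilde{W}\subseteq H^1(V)$ and $\tilde{W}\geq W$, so $R\geq 0$ is well defined on $H^1(V)$; and since both $\tilde{W}$ (by hypothesis) and $W$ (by Proposition \ref{orthogonal}) are local, $R$ is additive: $R(u+v)=R(u)+R(v)$ whenever $|u|\wedge |v|=0$.

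Next I would use the lower domination $S_\infty\preccurlyeq\tilde{S}$. The same characterisation with the test pair $(u,0)$ gives $\tilde{W}(|u|)\leq W_\infty (u)$, hence $\tilde{W}(u)\leq W_\infty (u)$ for $u\geq 0$; the reflected inequality from $S_\infty\preccurlyeq -\tilde{S}(-\cdot)$ gives the analogue for $u\leq 0$. Since $W_\infty$ coincides with $W$ on $H^1_0(V)$ and equals $\infty$ off $H^1_0(V)$, and since $H^1_0(V)$ is stable under $u\mapsto u^\pm$, combining this with $\tilde{W}\geq W$ and the additivity of $R$ yields $R(u)=0$ for every $u\in H^1_0(V)$. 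Thus $R$ is a nonnegative additive functional, vanishing on $H^1_0(V)$, for which $W+R$ is convex.

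The heart of the argument, and the step I expect to be the main obstacle, is to show that $R$ factors through the trace, that is, $R(u)=R(w)$ whenever $u|_{V_0}=w|_{V_0}$. Using the orthogonal decomposition $H^1(V)=H^1_0(V)\oplus\mathcal{H}$, where $\mathcal{H}$ is the $N$-dimensional space of harmonic functions and orthogonality is with respect to $\langle\cdot ,\cdot\rangle$, this is equivalent to the identity $\tilde{W}(h+u_0)=\tilde{W}(h)+W(u_0)$ for every harmonic $h$ and every $u_0\in H^1_0(V)$ (note $W(h+u_0)=W(h)+W(u_0)$ by orthogonality, and $\tilde{W}=W$ on $H^1_0(V)$). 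The tools I would use are the boundary-value-preserving lattice invariances extracted from the two dominations: from $\tilde{S}\preccurlyeq S$ and the modularity of $W$ (assertion $3^\circ$ of Proposition \ref{orthogonal}) one gets $R(u\wedge v)\leq R(u)$ for $u,v\geq 0$, while from $S_\infty\preccurlyeq\tilde{S}$ one gets $R(u\vee v)=R(v)$ for $v\geq 0$ and $u\in H^1_0(V)$, $u\geq 0$ (together with their sign-reflections). These invariances let one move freely in the interior directions while keeping the boundary values fixed. The genuine difficulty is concentrated at the points of $V_0$: because functions of finite energy are H\"older continuous (assertion $2^\circ$ of Theorem \ref{properties}), the points $p_i$ have \emph{positive} capacity, so the trace is continuous on $H^1(V)$ but one cannot cut $u$ off near $V_0$ at negligible energy cost, as one would in $\R^N$. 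I would therefore combine the lattice invariances above with the convexity and lower semicontinuity of $\tilde{W}$ to upgrade them to the full invariance $R(h+u_0)=R(h)$.

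Granting that $R$ factors through the trace, the remainder is routine. By assertion $3^\circ$ of Corollary \ref{ausLipschitz} I would fix, for each $i$, a function $\phi_i\in H^1(V)$ with $0\leq\phi_i\leq 1$, $\phi_i(p_i)=1$, supported in a neighbourhood $U_i$ of $p_i$, the $U_i$ chosen pairwise disjoint (so $\phi_i(p_j)=0$ for $j\neq i$), and set $B_i(c):=R(c\phi_i)$. For $u\in H^1(V)$ the functions $u$ and $\sum_i u(p_i)\phi_i$ have the same trace, so $R(u)=R(\sum_i u(p_i)\phi_i)=\sum_i B_i(u(p_i))$, the last equality by additivity across the disjoint supports; this gives $\tilde{W}=W_B$. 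Finally $B_i(0)=R(0)=0$ and $B_i\geq 0$; lower semicontinuity of $B_i$ follows from that of $\tilde{W}$ via $B_i(c)=\tilde{W}(c\phi_i)-c^2W(\phi_i)$ and the continuity of $c\mapsto c^2W(\phi_i)$; and bi-monotonicity follows from the truncation inequality $R(u\wedge c)\leq R(u)$ (and its reflection) applied to $u=c'\phi_i$, which yields $B_i(c)\leq B_i(c')$ for $0\leq c\leq c'$ and the corresponding monotonicity on $\R_-$. This produces the desired normalised, lower semicontinuous, bi-monotone functions $B_1,\dots ,B_N$ with $\tilde{W}=W_B$.
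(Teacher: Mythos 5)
Your strategy coincides with the paper's own: the paper likewise studies $\Psi:=\tilde{W}-W$ on $H^1(V)$, derives the monotonicity of $\Psi$ on the positive cone and the inclusion ${\rm dom}\,\tilde{W}\subseteq H^1(V)$ from $\tilde{S}\preccurlyeq S$ via the characterisation of domination in \cite{By96}, derives $\Psi=0$ on the positive cone of $H^1_0(V)$ from $S_\infty\preccurlyeq\tilde{S}$, shows that $\Psi$ depends only on the trace on $V_0$, and finally reads off the functions $B_i^{\pm}$ by evaluating $\Psi$ on multiples of boundary bump functions with pairwise disjoint supports (the paper takes the harmonic extensions $u_i$ of the indicators of $p_i$ on $V_1$, which are supported in the single cell $F_i(V)$). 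Your preliminary reductions and your final step are sound and essentially identical to the paper's Steps 1, 2 and 4.

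The difficulty is that the step you yourself single out as ``the main obstacle'' --- that $R(u)=R(v)$ whenever $u,v\geq 0$ have the same trace on $V_0$ --- is exactly the step you do not prove: ``combine the lattice invariances with the convexity and lower semicontinuity of $\tilde{W}$ to upgrade them to the full invariance'' is a plan, not an argument, so the proof has a genuine hole at its centre. For the record, the paper closes it with precisely the tools you list, as follows. First reduce to the comparable case by comparing both $u$ and $v$ with $u\vee v$, which has the same trace. For $0\leq u\leq v$ with $u=v$ on $V_0$ and ${\rm supp}\,(v-u)\subseteq V\setminus V_0$, write $v=u\vee w$ for a nonnegative $w\in H^1_0(V)$ (constructed with the cut-off function of assertion $3^\circ$ of Corollary \ref{ausLipschitz}); then the submodularity of $\Psi$ together with $\Psi(w)=\Psi(u\wedge w)=0$ gives $\Psi(v)\leq\Psi(u)$, and monotonicity gives the converse. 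The support restriction is then removed not by cutting $u$ off near $V_0$ --- which, as you rightly note, is impossible at points of positive capacity --- but by truncating the \emph{difference}: set $v_n:=u+h_n\circ(v-u)$ with the maps $h_n$ of Corollary \ref{ausstattpointwise}; since $v-u$ is continuous and vanishes on $V_0$, the support of $v_n-u$ is contained in $\{v-u\geq \frac1n\}$ and hence avoids $V_0$, and $v_n\to v$ in $H^1(V)$ by Lemma \ref{stattpointwise}; lower semicontinuity and monotonicity of $\Psi$ then finish the step. Note that the convexity of $\tilde{W}$, which you propose to invoke here, plays no role in this argument. Until this step (or an equivalent one) is supplied, the proposal is incomplete at its decisive point.
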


\begin{proof}
Throughout the proof, the space $H^1 (V)$ is endowed with the topology induced by the $\|\cdot\|_{H^1_\mu}$ norm.
Consider the function $\Psi := \tilde{W} -W$ on the space $H^1 (V)$. This function is well defined since the effective domain of $W$ is $H^1 (V)$. Moreover, since $W$ is continuous on $H^1 (V)$, the function $\Psi$ is lower semicontinuous on $H^1 (V)$. 

{\em 1st step.} We show, using the domination $\tilde{S}\preccurlyeq S$, that the function $\Psi$ is monotone on the positive cone of $H^1 (V)$ in the sense that $0\leq v\leq u$ implies $\Psi (v) \leq \Psi (u)$. In fact, by \cite[Th\'eor\`eme~ 3.3]{By96}, for every $u$, $v\in L^2_\mu (V)$ with $v\geq 0$,
\begin{equation}\label{ausBy}
 \tilde{W} ((|u|\wedge v)\sgn (u)) + W (|u|\vee v) \leq \tilde{W} (u) + W(v) .
\end{equation}
Choosing $u$, $v\in H^1 (V)$ such that $0\leq v\leq u$, this inequality implies
\[
 \tilde{W} (v) + W(u) \leq \tilde{W} (u) + W(v) ,
\]
and from here follows the monotonicity of $\Psi$. 

Let us remark, in addition, that the above characterization of domination, together with the fact that $\tilde{S}$ is order preserving, implies ${\rm dom}\, \tilde{W}\subseteq H^1 (V)$. Since the semigroup $\tilde{S}$ is assumed to be order preserving, and by \cite[Corollaire~2.2]{By96}, we have for every $u$, $v\in L^2_\mu (V)$,
\begin{equation} \label{order2}
 \tilde{W} (u\vee v) + \tilde{W} (u\wedge v) \leq \tilde{W} (u) + \tilde{W} (v).
\end{equation}
Applying successively the inequality \eqref{ausBy} with $u$ replaced respectively by $u^+$ and $-u^-$ and with $v=0$, and then the inequality \eqref{order2} with $v=0$, we obtain that for every $u\in L^2_\mu (V)$
\begin{align*}
 W(u^+) + W(u^-) & \leq \tilde{W} (u^+) + \tilde{W} (-u^-) \leq \tilde{W}(u) .
\end{align*}
Hence, if $\tilde{W} (u)$ is finite, then $u^+$, $u^-\in H^1 (V)$, and therefore $u = u^+ - u^-\in H^1 (V)$.

{\em 2nd step.} In a similar way, using now the domination $S_\infty \preccurlyeq \tilde{S}$, one shows that the function $W_\infty - \tilde{W}$ is monotone on the positive cone of $H^1_0 (V)$. Since $W_\infty$ and $W$ coincide on $H^1_0 (V)$, this means that $-\Psi$ is monotone on the positive cone of $H^1_0 (V)$. As a consequence, $\Psi$ is constant on the positive cone of $H^1_0 (V)$. The normalization $\tilde{W} (0)=0$ then implies $\Psi =0$ on the positive cone of $H^1_0 (V)$. 

{\em 3rd step.} We now show that for every nonnegative $u$, $v\in H^1 (V)$ the implication
\[
 u=v \text{ on the intrinsic boundary } V_0 \quad \Rightarrow \quad \Psi (u) = \Psi (v) 
\]
holds. Combining the inequality \eqref{order2} with assertion $3^\circ$ of Proposition \ref{orthogonal}, we obtain, for every $u$, $v\in H^1 (V)$,
\begin{equation} \label{eq.psi}
 \Psi (u\vee v) + \Psi (u\wedge v) \leq \Psi (u) + \Psi (v) .
\end{equation}
Now let $u$, $v\in H^1 (V)$ be nonnegative, and assume that $u = v$ on the intrinsic boundary $V_0$. 

Assume first that $u\leq v$ and that the support of $v-u\in H^1_0 (V)$ is contained in $V\setminus V_0$. Then there exists, according to assertion $3^\circ$ of Corollary \ref{ausLipschitz}, a nonnegative $w\in H^1_0 (V)$ such that $v= u\vee w$. By the second step, $\Psi (w) = \Psi (u\wedge w) = 0$, and hence the inequality \eqref{eq.psi} implies $\Psi (v) \leq \Psi (u)$. By monotonicity, the converse inequality holds, too, and thus $\Psi (u) = \Psi (v)$. 

Assume next merely that $u\leq v$. Then, involving Corollary \ref{ausstattpointwise}, we can approximate $v$ in the space $H^1 (V)$ by a sequence of functions $(v_n)$ such that $u\leq v_n$, and such that the support of $v_n -u$ is contained in $V\setminus V_0$. The preceding argument and the lower semicontinuity of $\Psi$ then give $\Psi (v) \leq \liminf_{n\to\infty} \Psi (v_n) = \Psi (u)$, and again the monotonicity of $\Psi$ yields the equality $\Psi (u) = \Psi (v)$. 

Finally, we only assume that $u=v$ on $V_0$. Since $u\leq u\vee v$ and $v\leq u\vee v$, and since the functions $u$, $v$ and $u\vee v$ coincide on the intrinsic boundary $V_0$, the preceding argument yields $\Psi (u) = \Psi (u\vee v) = \Psi (v)$, and we have proved the claim of this step. 

{\em 4th step.} Fix $i\in \{ 1,\dots ,N\}$. Let $w_i : V_1 \to \R$ be defined by
\[
 w_i (x) = \begin{cases}
              1 & \text{if } x=p_i , \\[2mm]
              0 & \text{else,}
             \end{cases}
\]
and let $u_i\in H^1 (V)$ be the harmonic extension of $w_i$ (see \cite[Section 3]{Br13}). By the maximum principle \cite[Corollary 3.9]{Br13}, $u_i \geq 0$. Moreover, the special form of the functions $w_i$ implies $|u_i|\wedge |u_j| = 0$ for $i\not= j$. Define $B_i^+ : [0,\infty ) \to [0,\infty ]$ by
\[
 B_i^+ (s) := \Psi (s\, u_i) .
\]
By the lower semicontinuity and monotonicity of $\Psi$, $B_i$ is lower semicontinuous and increasing, and since $\Psi (0) = 0$, we also have $B_i^+ (0)=0$. Now it follows easily from Step 3 and the locality assumption that, for every nonnegative $u\in H^1 (V)$,
\begin{align*}
\Psi (u) & = \Psi \left(\sum_{i=1}^N u(p_i) \, u_i \right) \\
& = \sum_{i=1}^N \Psi (u(p_i) \, u_i ) \\
& = \sum_{i=1}^N B_i^+ (u(p_i))  
\end{align*}
and hence
\[
 \tilde{W} (u) = W(u) + \sum_{i=1}^N B_i^+ (u(p_i)). 
\]

Repeating the Steps 1 to 4 for $\Psi$ restricted to the negative cone of $H^1 (V)$, using now the domination $S_\infty \preccurlyeq -\tilde{S} (-\cdot )$ and assertion $2^\circ$ of Proposition  \ref{orthogonal}, we obtain the existence of lower semicontinuous, decreasing functions $B_i^- : (-\infty ,0] \to [0,\infty ]$ such that $B_i^- (0) = 0$ and
\[
 \tilde{W} (u) = W(u) + \sum_{i=1}^N B_i^- (u(p_i)) 
\] 
for every  $u\in H^1 (V)$ with $u\leq0$. Define $B_i : \R \to [0,\infty ]$ by 
\[
 B_i (s) := \begin{cases}
             B_i^+ (s) & \text{if } s\geq 0 , \\[2mm]
             B_i^- (s) & \text{if } s\leq 0 .
            \end{cases}
\]
Then $B_i$ is lower semicontinuous, bi-monotone and normalised. Moreover, by locality of $\tilde{W}$ and $W$, for every $u\in H^1 (V)$,
\begin{align*}
 \tilde{W} (u) & = \tilde{W} (u^+) + \tilde{W} (-u^-) \\
 & = W (u^+) + \sum_{i=1}^N B_i^+ (u^+(p_i)) + W(-u^-) + \sum_{i=1}^N B_i^- (-u^-(p_i)) \\
 & = W(u) + \sum_{i=1}^N B_i (u(p_i)) .  
\end{align*}
The theorem is fully proved. 
\end{proof}

\nocite{St06}
\nocite{StWo04}
\nocite{HePeSt04}
\nocite{MeStTe04}
\nocite{Ou04}
\nocite{ChWa12}
\nocite{CiGr03}
\nocite{IoRoTe12}
\nocite{Br13}
\nocite{Ki89,Ki93}
\nocite{FuSh92}

\providecommand{\bysame}{\leavevmode\hbox to3em{\hrulefill}\thinspace}

\bibliographystyle{amsplain}
  
\def\cprime{$'$} 
  \def\ocirc#1{\ifmmode\setbox0=\hbox{$#1$}\dimen0=\ht0 \advance\dimen0
  by1pt\rlap{\hbox to\wd0{\hss\raise\dimen0
  \hbox{\hskip.2em$\scriptscriptstyle\circ$}\hss}}#1\else {\accent"17 #1}\fi}
\providecommand{\bysame}{\leavevmode\hbox to3em{\hrulefill}\thinspace}
\providecommand{\MR}{\relax\ifhmode\unskip\space\fi MR }
\providecommand{\MRhref}[2]{%
  \href{http://www.ams.org/mathscinet-getitem?mr=#1}{#2}
}
\providecommand{\href}[2]{#2}

\end{document}